\newtheorem*{thm*}{Theorem}
\newtheorem{thm}{Theorem}[section]
\newtheorem{lem}[thm]{Lemma}  
\newtheorem{cor}[thm]{Corollary}
\theoremstyle{definition}
\theoremstyle{remark}
\newtheorem{remark}{Remark}
\newtheorem*{question*}{Question}
\newcommand{\ord}{\mathrm{ord}(\chi)}
\newcommand{\YY}{\mathbb{Y}}
\newcommand{\ZZ}{\mathbb{Z}}
\newcommand{\FF}{\mathbb{F}}
\newcommand{\GAP}{\mathrm{GAP}}
\newcommand{\AP}{\mathrm{AP}}
\newcommand{\FactType}{\omega}
\newcommand{\Prob}{\mathrm{P}}
\newcommand{\Ind}{\mathbf{1}}
\numberwithin{equation}{section}
\title{Mean values of arithmetic functions in short intervals and in arithmetic progressions in the large-degree limit}
\date{}
\author{Ofir Gorodetsky}
\newcommand{\Addresses}{{
		\bigskip
		\footnotesize
		
		\textsc{Raymond and Beverly Sackler School of Mathematical Sciences, Tel Aviv University, Tel Aviv 69978, Israel}\par\nopagebreak
		\textit{E-mail address:} \texttt{ofir.goro@gmail.com}
}}
\begin{document}
\maketitle

\begin{abstract}
A classical problem in number theory is showing that the mean value of an arithmetic function is asymptotic to its mean value over a short interval or over an arithmetic progression, with the interval as short as possible or the modulus as large as possible. 

We study this problem in the function field setting, and prove for a wide class of arithmetic functions (namely factorization functions), that such an asymptotic result holds, allowing the size of the short interval to be as small as a square-root of the size of the full interval, and analogously for arithmetic progressions. 

For instance, our results apply for the indicator function of polynomials with a divisor of given degree, and are much stronger than those known for the analogous function over the integers.

As opposed to many previous works, our results apply in the \emph{large-degree limit}, where the base field $\FF_q$ is fixed. Our proofs are based on relationships between certain character sums and symmetric functions, and in particular we use results from symmetric function theory due to E\u{g}ecio\u{g}lu and Remmel. We also use recent bounds of Bhowmick, L\^{e} and Liu on character sums, which are in the spirit of the {D}rinfeld--{V}l\u{a}du\c{t} bound.
\end{abstract}

\section{Introduction}
An important and challenging class of problems in analytic number theory is showing that the mean value of an arithmetic function in a short interval $[x,x+h]\cap \ZZ$ is asymptotic to its mean over a long interval. Concretely, this often takes the following form: proving for various $\alpha\colon \mathbb{N}\to \mathbb{C}$ that
\begin{equation}\label{eq:meanLongShort}
\frac{\sum_{x \le n \le x+x^{\varepsilon}} \alpha(n)}{x^{\varepsilon}} = \frac{\sum_{n \le x} \alpha(n)}{x}(1+o(1)), \qquad x \to \infty
\end{equation}
for $\varepsilon\in (0,1)$ as small as possible. If the mean value of $\alpha$ tends to $0$ and does not have a main term, as happens, for example, for the M\"obius function, one should be more careful and instead ask for 
\begin{equation}\label{eq:meanLongShort2}
\frac{\sum_{x \le n \le x+x^{\varepsilon}} \alpha(n)}{x^{\varepsilon}} = o(1), \qquad x \to \infty
\end{equation}
to hold for $\varepsilon \in (0,1)$ as small as possible. A related class of problems is showing that the mean value of $\alpha\colon \mathbb{N}\to \mathbb{C}$ over integers $1\le n\le x$ which lie in the arithmetic progression $a \bmod q$ ($\gcd(a,q)=1$) is close to the mean value of $\alpha$ over integers $1\le n \le x$ which are coprime to $q$. Formally, this often means proving
\begin{equation}\label{eq:meanAP}
\frac{\sum_{n \le x, \, n \equiv a \bmod q}\alpha(n)}{x/\phi(q)} = \frac{\sum_{ n \le x, \, (n,q)=1} \alpha(n)}{x(\phi(q)/q)} (1+o(1)), \qquad x \to \infty
\end{equation}
or
\begin{equation}\label{eq:meanAP2}
\frac{\sum_{n \le x, \, n \equiv a \bmod q}\alpha(n)}{x/\phi(q)} = o(1), \qquad x \to \infty
\end{equation}
uniformly in $q<x^{1-\varepsilon}$ for $\varepsilon \in (0,1)$ as small as possible.

Conditionally on the Generalized Riemann Hypothesis (GRH), it is a classical result that in the range $\varepsilon>1/2$, \eqref{eq:meanLongShort} and \eqref{eq:meanAP} hold for the von Mangoldt function $\Lambda$ and \eqref{eq:meanLongShort2} and \eqref{eq:meanAP2} hold for the M\"obius function $\mu$. Ramachandra \cite{ramachandra1976} showed that RH implies \eqref{eq:meanLongShort} for $\varepsilon>1/2$ and $\alpha=d_k$, the $k$th divisor function ($k$ not necessarily an integer) and some other functions. K\'{a}tai \cite{katai1985}, building on ideas of Ramachandra, proved that RH also implies \eqref{eq:meanLongShort} for $\varepsilon>1/2$ and $\alpha$ the indicator function of $k$-almost primes. Some other examples exist \cite{cui2014, Feng2017, katai2003}. 

There are close analogues in the function field setting of these results. Because GRH is known in that setting (a theorem due to Weil), they are unconditional. However, both in the number field setting and the function field setting, these results are proven by \emph{ad hoc} arguments, each using GRH in a different way, and in particular the results always require the associated Dirichlet series $\sum \alpha(n)/n^s$ to possess certain structure. In this paper, we prove analogues of \eqref{eq:meanLongShort}--\eqref{eq:meanAP2} in the function field setting for $\varepsilon\in (1/2,1)$ for a wide class of arithmetic functions defined over the polynomial ring $\FF_q[T]$, the so-called ``factorization functions.'' As in previous works, our main tool is GRH for $\FF_q(T)$, but we are able to treat all sensible arithmetic functions at once. This leads to interesting applications which are currently out of reach in the number field setting; these are described in \S \ref{sec:app}.

\subsection{The function field setting}
Throughout we fix a prime power $q$. Let $\FF_q[T]$ be the polynomial ring in $T$ over $\FF_q$, $\mathcal{M}_{n} \subseteq \FF_q[T]$ be the subset of monic polynomials of degree $n$, $\mathcal{P}_{n} \subseteq \mathcal{M}_{n}$ be the subset of monic irreducible polynomials of degree $n$, $\mathcal{M}= \cup_{n \ge 0} \mathcal{M}_{n}$ be the set of all monic polynomials and $\mathcal{P}=\cup_{n \ge 0} \mathcal{P}_{n}$ be the set of all monic irreducible polynomials.

To any $f \in \mathcal{M}$ with prime factorization $f=\prod_{i=1}^{k} P_i^{e_i}$ ($P_i \in \mathcal{P}$ distinct, $e_i \ge 1$), we can associate the following multiset, named the \textit{extended factorization type of $f$}:
\begin{equation}
\FactType_f := \{ (\deg (P_i), e_i) : 1 \le i \le k\}.
\end{equation}
(We often omit the word ``extended.'') Following Rodgers \cite{rodgers2018}, an arithmetic function $\alpha \colon \mathcal{M} \to \mathbb{C}$ is called a \emph{factorization function} if $\alpha(f)$ depends only on $\FactType_f$. Some of the most commonly studied arithmetic functions in number theory, when considered in the function field setting, are instances of factorization functions: the von Mangoldt function
\begin{equation} 
\Lambda(f) = \begin{cases} \deg (P) & \mbox{if $f=P^k$ for $P \in \mathcal{P}$ and $k \ge 1$}, \\ 0 & \mbox{otherwise}, \end{cases}
\end{equation}
the M\"obius function
\begin{equation}
\mu(f) = \begin{cases} (-1)^{k} & \mbox {if $f=P_1P_2\cdots P_k$ for distinct $P_i \in \mathcal{P}$,} \\ 0 & \mbox{otherwise}, \end{cases}
\end{equation}
the indicator of squarefrees
\begin{equation}
\mu^2(f) = \begin{cases} 1 & \mbox {if $f$ is squarefree,} \\ 0 & \mbox{otherwise}, \end{cases}
\end{equation}
the divisor functions, and many more.

A short interval of size $q^{h+1}$ around $f_0 \in \FF_q[T]$ is the subset
\begin{equation}
I(f_0,h):=\{ f \in \FF_q[T] : \|f-f_0\| \le q^{h+1} \} = \{ f \in \FF_q[T] : \deg(f-f_0) \le h\},
\end{equation}
where $\|f\|:=|\FF_q[T]/(f)|=q^{\deg (f)}$ for $f \neq 0$ and $\|0\|:=0$. In addition to short intervals in $\FF_q[T]$, we also recall the definition of arithmetic progressions in this setting. Consider a non-zero polynomial $M\in \FF_q[T]$ and a polynomial $f_0 \in \mathcal{M}_n$ coprime to $M$. We define the arithmetic progression of monic polynomials of degree $n$ in the residue class $f_0 \bmod M$ to be
\begin{equation}
\AP(f_0,M) := \{ f \in \mathcal{M}_{\deg(f_0)} : f \equiv f_0 \bmod M \},
\end{equation}
which is a subset of 
\begin{equation}
\mathcal{M}_{n;M} := \{ f \in \mathcal{M}_{n} : \gcd(f,M)=1\}.
\end{equation}
If $n \ge \deg(M)$, then $\left| \mathcal{M}_{n;M} \right| = q^{n-\deg(M)} \phi(M)$, where $\phi(M)$ is Euler's totient function.
\subsection{Main results}
For a given arithmetic function $\alpha \colon \mathcal{M} \to \mathbb{C}$, let $\langle \alpha \rangle_{S}$ be the mean value of $\alpha$ over a non-empty finite subset $S \subseteq \mathcal{M}$:
\begin{equation}
\langle \alpha \rangle_{S} := \frac{1}{|S|} \sum_{f \in S} \alpha(S).
\end{equation}
We prove the following analogue of \eqref{eq:meanLongShort} and \eqref{eq:meanLongShort2}.
\begin{thm}\label{cor:ForFunc}
Let $\alpha \colon \mathcal{M}\to \mathbb{C}$ be a factorization function. Let $0 \le h \le n-1$ and $f_0 \in \mathcal{M}_{n}$. Then
\begin{equation}\label{eq:upper_function}
\left| \langle \alpha \rangle_{I(f_0,h)} - \langle \alpha \rangle_{\mathcal{M}_{n}} \right| \le \max_{f \in \mathcal{M}_{n}}  \left|\alpha(f) \right|   q^{\frac{n}{2}-h-1} e^{O_q(\frac{n \log \log (n+2)}{\log (n+2)})}.
\end{equation}
\end{thm}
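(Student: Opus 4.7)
The plan is to decompose $\alpha$ into indicators of factorization types, convert the short-interval average into a sum of Dirichlet character sums over $\mathcal{M}_{n}$, and then bound each such character sum by means of Weil's Riemann Hypothesis for $L$-functions over $\mathbb{F}_{q}[T]$.

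First I would exploit that $\alpha$ depends only on the factorization type to write $\alpha(f) = \sum_{\lambda} \alpha(\lambda)\, \mathbf{1}[\omega_{f} = \lambda]$, where $\lambda$ ranges over factorization types of total weight $n$ and $|\alpha(\lambda)| \le \max_{f \in \mathcal{M}_{n}} |\alpha(f)|$. By the triangle inequality it then suffices to prove the analogue of \eqref{eq:upper_function} for each indicator $\mathbf{1}[\omega_{f} = \lambda]$ separately, and to sum the resulting estimates over $\lambda$.

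Next I would convert the short-interval constraint into a congruence condition via the reciprocal involution $f \mapsto f^{\ast}$ with $f^{\ast}(T) := T^{\deg f} f(1/T)$: this sends $I(f_{0},h)$ bijectively onto an arithmetic progression modulo $T^{n-h}$ inside the set of reciprocated polynomials of degree $n$. Orthogonality of Dirichlet characters modulo $T^{n-h}$ then rewrites $\langle \alpha \rangle_{I(f_{0},h)} - \langle \alpha \rangle_{\mathcal{M}_{n}}$ as a normalised sum, over non-principal characters $\chi \bmod T^{n-h}$, of the character sums $\Psi_{n}(\alpha,\chi) := \sum_{f \in \mathcal{M}_{n}} \alpha(f)\,\chi(f^{\ast})$. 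For a type indicator, the generating function $\sum_{n \ge 0} \Psi_{n}(\alpha,\chi)\, u^{n}$ admits a symmetric-function expression in the Frobenius eigenvalues $\gamma_{j,\chi}$ of $L(u,\chi) = \prod_{j}(1-\gamma_{j,\chi} u)$, obtained by expanding the twisted Euler product $\prod_{P}(1-\chi(P) u^{\deg P})^{-1}$ and extracting the contributions of type $\lambda$. Weil's theorem gives $|\gamma_{j,\chi}| \le q^{1/2}$, hence a uniform bound $|\Psi_{n}(\mathbf{1}[\omega_{\cdot} = \lambda],\chi)| \le B(\lambda)\, q^{n/2}$ for some purely combinatorial factor $B(\lambda)$.

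Finally I would sum over the $O(q^{n-h})$ non-principal characters modulo $T^{n-h}$ and over all factorization types $\lambda$, then divide by $|I(f_{0},h)| = q^{h+1}$. The character count combines with the Weil square-root savings to produce the main factor $q^{n/2 - h - 1}$. The key difficulty, and the step where the subexponential factor $e^{O_{q}(n \log \log(n+2)/\log(n+2))}$ enters, is bounding $\sum_{\lambda} B(\lambda)$: the raw partition count $p(n) = e^{O(\sqrt{n})}$ is not directly sufficient because the $B(\lambda)$ grow nontrivially with $\lambda$, so careful symmetric-function analysis of the Schur-type expansion (e.g.\ hook-length or dimension estimates for irreducible $GL_{N}$ representations, where $N = n-h-1$ is the degree of $L(u,\chi)$) is needed to obtain precisely the loss stated in the theorem.
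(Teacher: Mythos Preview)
Your high-level outline matches the paper: reduce to a uniform bound on $\sum_{f\in\mathcal{M}_n}\alpha(f)\chi(f)$ over nontrivial characters (your reciprocation trick is equivalent to the paper's direct use of Hayes short-interval characters $\chi\in G(R_{n-h-1,1})$), decompose by factorization type via the triangle inequality, and sum. The arithmetic with the character count and $|I(f_0,h)|$ correctly produces the factor $q^{n/2-h-1}$ once each character sum is bounded by $\max|\alpha|\cdot q^{n/2}$ times something subexponential in $n$.

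The genuine gap is in that last bound. Expressing the type-indicator sums as symmetric functions of the Frobenius eigenvalues $\gamma_j$ and invoking $|\gamma_j|\le q^{1/2}$ is Rodgers' approach, which the paper explicitly notes yields constants depending on $n$ in an uncontrolled way; your suggested fix (hook-length or $GL_N$-dimension estimates with $N=n-h-1$) is not known to give a subexponential factor, and for $N$ comparable to $n$ such representation-theoretic counts are generically exponential in $n$. The paper proceeds differently: it writes $\sum_{f:\omega_f=\omega}\chi(f)$ as a product over degrees $d$ of monomial symmetric polynomials $m_{\lambda}(\chi(P):P\in\mathcal{P}_d)$, passes to the power-sum basis via the E\u{g}ecio\u{g}lu--Remmel formula, and then bounds each resulting power sum $\sum_{P\in\mathcal{P}_d}\chi(P)^k$ by the \emph{minimum} of the RH bound $(\ell+\deg M+1)q^{d/2}/d$ and the trivial bound $q^d/d$. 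Using the trivial bound for small $d$ (namely $d\le \lfloor 2\log_q(\ell+\deg M+1)\rfloor$), where the RH bound is actually worse, is the decisive idea (credited to Bhowmick--L\^{e}--Liu); combined with a Rankin-type estimate on the resulting generating function, it converts what would otherwise be an exponential loss of order $\binom{O(n-h)+n}{n}$ into $e^{O_q(n\log\log(n+2)/\log(n+2))}$. Your proposal uses only the eigenvalue bound $|\gamma_j|\le q^{1/2}$, which amounts to using RH alone and discards this trivial-bound input, so as written it cannot produce the claimed error term.
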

The analogous result for arithmetic progressions is the following.
\begin{thm}\label{cor:ForFuncAP}
Let $\alpha\colon \mathcal{M}\to \mathbb{C}$ be a factorization function. Let $M \in \FF_q[T]$ be a non-zero polynomial and let $f_0 \in \mathcal{M}_n$ be a polynomial coprime to $M$. Then whenever $n \ge \deg(M)$,
\begin{equation}\label{eq:upper_functionAP}
\left| \langle \alpha \rangle_{\AP(f_0,M)} - \langle \alpha \rangle_{\mathcal{M}_{n;M}} \right| \le \max_{f \in \mathcal{M}_{n;M}} \left|\alpha(f) \right|  q^{\deg(M)-\frac{n}{2}} e^{O_q(\frac{n \log \log (n+2)}{\log (n+2)})}.
\end{equation}
\end{thm}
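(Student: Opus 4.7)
The plan is to reduce the problem to character sums via Dirichlet orthogonality modulo $M$, and then to invoke a uniform GRH-based bound on twisted sums of factorization functions --- the same analytic engine that drives Theorem \ref{cor:ForFunc}. Since $f_0$ is coprime to $M$, any $f\in\mathcal{M}_n$ with $f\equiv f_0\bmod M$ is automatically coprime to $M$, and
\[
\mathbf{1}[f\in\AP(f_0,M)] = \frac{1}{\phi(M)}\sum_{\chi \bmod M}\chi(f)\,\overline{\chi(f_0)}.
\]
Writing $\Psi_n(\alpha,\chi):=\sum_{f\in\mathcal{M}_n}\alpha(f)\chi(f)$, the principal character contributes $\Psi_n(\alpha,\chi_0)=|\mathcal{M}_{n;M}|\,\langle\alpha\rangle_{\mathcal{M}_{n;M}}$ because $\chi_0$ is the indicator of coprimality with $M$. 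Since $|\AP(f_0,M)|=q^{n-\deg(M)}=|\mathcal{M}_{n;M}|/\phi(M)$, dividing by $|\AP(f_0,M)|$ extracts exactly $\langle\alpha\rangle_{\mathcal{M}_{n;M}}$ from the $\chi_0$ term and leaves
\[
\langle\alpha\rangle_{\AP(f_0,M)}-\langle\alpha\rangle_{\mathcal{M}_{n;M}}=\frac{1}{q^{n-\deg(M)}\phi(M)}\sum_{\chi\ne\chi_0}\overline{\chi(f_0)}\,\Psi_n(\alpha,\chi).
\]

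The heart of the argument is then a uniform estimate on $|\Psi_n(\alpha,\chi)|$, valid for every non-principal $\chi$ modulo $M$ and every factorization function $\alpha$, of the shape
\[
|\Psi_n(\alpha,\chi)|\le\max_{f\in\mathcal{M}_{n;M}}|\alpha(f)|\cdot q^{n/2}\cdot e^{O_q(n\log\log(n+2)/\log(n+2))}.
\]
To prove this, the natural approach is to expand $\alpha$ in a basis of factorization functions indexed by partitions of $n$ --- for instance the symmetric-function/power-sum basis used by Rodgers \cite{rodgers2016}, in which factorization types at degree $n$ correspond to conjugacy classes in $S_n$. On each basis element, $\Psi_n(\,\cdot\,,\chi)$ becomes a Newton-type symmetric polynomial in the Frobenius inverse roots of the primitive character inducing $\chi$; Weil's theorem pins these inverse roots to the circle $|z|=\sqrt{q}$, which delivers the $q^{n/2}$ savings. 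The combinatorial factor $e^{O_q(n\log\log n/\log n)}$ then arises from bounding the total variation of the expansion coefficients, which behaves like divisor-type maxima at degree $n$.

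Granting this twisted-sum bound, the proof is finished by the triangle inequality with $|\chi(f_0)|\le 1$ and the trivial $\phi(M)-1\le\phi(M)$, which give
\[
\left|\langle\alpha\rangle_{\AP(f_0,M)}-\langle\alpha\rangle_{\mathcal{M}_{n;M}}\right|\le\frac{1}{q^{n-\deg(M)}}\max_{\chi\ne\chi_0}|\Psi_n(\alpha,\chi)|,
\]
and $q^{n/2}/q^{n-\deg(M)}=q^{\deg(M)-n/2}$ recovers \eqref{eq:upper_functionAP}. The main obstacle is, of course, the twisted-sum bound itself: the estimate must hold for \emph{every} factorization function at once, uniformly over every non-principal character modulo $M$, while the combinatorial overhead remains only of the stated sub-exponential size. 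Once that single analytic input is in place --- and it also suffices for Theorem \ref{cor:ForFunc} via the characters attached to short intervals --- the passage from long intervals to arithmetic progressions is essentially mechanical.
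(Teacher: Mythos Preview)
Your proposal is correct and follows essentially the same route as the paper: expand the indicator of $\AP(f_0,M)$ via Dirichlet characters modulo $M$ (the paper's Lemma~\ref{lemshort} with $\ell=0$), isolate the principal-character contribution as $\langle\alpha\rangle_{\mathcal{M}_{n;M}}$, and then bound each remaining $\Psi_n(\alpha,\chi)$ by the uniform estimate of Theorem~\ref{thm:expothm} (specifically \eqref{eq:upper2}, applicable since $\ell+\deg(M)=\deg(M)\le n$). The only difference is presentational: the paper treats the twisted-sum bound as a black box already proved, whereas you also sketch its internal mechanism.
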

As long as $\max_{f \in \mathcal{M}_n} \left| \alpha(f) \right|$ grows subexponentially in $n$, Theorems \ref{cor:ForFunc} and \ref{cor:ForFuncAP} give a non-trivial result in the range $\limsup_{n \to \infty} \frac{h+1}{n} > 1/2$ and $\limsup_{n \to \infty} (1-\frac{\deg(M)}{n}) > 1/2$, respectively. These limsups are the function field analogue of the quantity $\varepsilon$ discussed before. 

We also prove the following results on the variance of the sums $\sum_{f \in I(f_0,h)} \alpha(f)$ and $\sum_{f \in \AP(f_0,M)} \alpha(f)$ as $f_0$ varies. First, we define the short-interval variance
\begin{equation}
\mathrm{Var}(\alpha;n,h) := \frac{1}{\left| \mathcal{M}_n \right|} \sum_{f_0 \in \mathcal{M}_n} \left| \sum_{f \in I(f_0,h)} \alpha(f) - q^{h+1} \langle \alpha \rangle_{\mathcal{M}_n}\right|^2,
\end{equation}
and the arithmetic progression variance
\begin{equation}
\mathrm{Var}_M(\alpha;n) := \frac{1}{\left| \mathcal{M}_{n;M} \right|} \sum_{f_0 \in \mathcal{M}_{n;M}} \left| \sum_{f \in \AP(f_0,M)} \alpha(f) - q^{n-\deg(M)} \langle \alpha \rangle_{\mathcal{M}_{n;M}}\right|^2.
\end{equation}
\begin{thm}\label{thm:var}
Let $\alpha \colon \mathcal{M}\to \mathbb{C}$ be a factorization function. For any $0 \le h \le n-1$, we have
\begin{equation}\label{eq:upper_functionvar}
\mathrm{Var}(\alpha;n,h)  \le 
\max_{f \in \mathcal{M}_{n}}  \left|\alpha(f) \right|^2  q^{h+1} e^{O_q(\frac{n \log \log (n+2)}{\log (n+2)})}.
\end{equation}
For any non-zero polynomial $M$ with $\deg(M)\le n$, we have
\begin{equation}\label{eq:upper_functionvarap}
\mathrm{Var}_M(\alpha;n)  \le \max_{f \in \mathcal{M}_{n;M}}  \left|\alpha(f) \right|^2   \frac{q^{n}}{\phi(M)} e^{O_q(\frac{n \log \log (n+2)}{\log (n+2)})}.
\end{equation}
\end{thm}
The trivial bound for $\mathrm{Var}(\alpha;n,h)$ is $q^{2(h+1)}\max_{f \in \mathcal{M}_{n}} |\alpha(f)|^2$, and we beat it as long as  $\limsup_{n \to \infty} \frac{h+1}{n}$ is positive, which corresponds to $\varepsilon>0$ in the number field setting. Similarly, \eqref{eq:upper_functionvarap} beats the trivial bound as long as $\limsup_{n \to \infty} (1-\frac{\deg(M)}{n}) > 0$.
\subsection{Applications}\label{sec:app}
Let $H(x,y,2y)$ be number of positive integers up to $x$ which have a divisor in $(y,2y]$. Ford \cite{ford20082, ford2008} showed that
\begin{equation}
H(x,y,2y)\asymp \frac{x}{(\log y)^{\delta} (\log \log y)^{3/2}}, \qquad 3 \le y \le \sqrt{x},
\end{equation}
where $\delta=1-\frac{1+\log \log 2}{\log 2}$. Here $A \asymp B$ means $cB \le A \le CB$ for absolute positive constants $c$ and $C$. Ford has a short interval version of this result \cite[Theorem~2]{ford20082}:
\begin{equation}\label{eq:FordShort}
H(x,y,2y)-H(x-\Delta,y,2y) \asymp \frac{\Delta}{x} H(x,y,2y), \qquad y_0 \le y \le \sqrt{x}, \ \frac{x}{\log^{10} x} \le \Delta \le x,
\end{equation}
where $y_0$ is a sufficiently large real number. Let $H_q(n,d)$ be the function field analogue of $H(x,y,2y)$, counting monic polynomials of degree $n$ which have a divisor of degree $d$. Meisner \cite[Theorem~1.2]{meisner2018erd} has recently shown that
\begin{equation}
H_q(n,d) \asymp \frac{q^n}{d^{\delta} (1+\log d)^{3/2}}, \qquad 1\le d \le \frac{n}{2}.
\end{equation}
Applying Theorem \ref{cor:ForFunc} with 
\begin{equation}
\alpha_{d} \colon \mathcal{M}\to\mathbb{C}, \qquad \alpha_{d}(f) =\begin{cases} 1 & \mbox{if $f$ has a divisor of degree $d$}, \\ 0 & \mbox{otherwise,} \end{cases}
\end{equation}
we obtain the following
\begin{cor}
For any $n \ge 1$, let $f_0 \in \mathcal{M}_n$ and $1 \le d \le n$. As $n \to \infty$, we have
\begin{equation}
\sum_{\substack{f \in I(f_0,h)\\ f \text{ has a divisor of degree }d}} 1 \sim q^{h+1} \frac{H_q(n,d)}{q^n}
\end{equation}
uniformly in $d$, as long as $\limsup_{n \to \infty} \frac{h}{n}>1/2$. 
\end{cor}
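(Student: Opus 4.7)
The proof essentially collapses onto the display equation that immediately precedes the corollary: everything has been done up to quoting an asymptotic bound on $H_q(n,d)$.

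The plan is first to verify that $\alpha_d$ is a legitimate factorization function, so that Theorem~\ref{cor:ForFunc} applies. Whether $f$ has a divisor of degree $d$ depends only on which subsets of the multiset of prime-power divisors of $f$ sum (in degree) to $d$, and this is determined by $\FactType_f$. Since $0 \le \alpha_d \le 1$, we have $\max_{f \in \mathcal{M}_n}|\alpha_d(f)| \le 1$, and Theorem~\ref{cor:ForFunc} gives exactly the displayed identity
\[
\sum_{\substack{f \in I(f_0,h)\\ f \text{ has a divisor of degree }d}} 1 = q^{h+1}\frac{ H_q(n,d)}{q^n} + O\!\left( q^{\frac{n}{2}} e^{O_q\left(\frac{n \log \log (n+2)}{\log (n+2)}\right)} \right),
\]
after recognizing that $\langle \alpha_d \rangle_{\mathcal{M}_n} = H_q(n,d)/q^n$.

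It then remains to show that the error term is of smaller order than the main term $q^{h+1} H_q(n,d)/q^n$ as $n \to \infty$, under the hypothesis $\limsup_{n \to \infty} h/n > 1/2$. Using Meisner's lower bound $H_q(n,d) \gg q^n / (d^\delta (1+\log d)^{3/2})$, valid for $1 \le d \le n/2$ (and extendable to $d \le n$ by the symmetry $H_q(n,d) = H_q(n,n-d)$ obtained from taking complementary divisors), the main term is at least of size $q^{h+1}/(n^\delta (1+\log n)^{3/2})$, which is only a polynomial-in-$n$ factor below $q^{h+1}$. The ratio of the error to the main term is therefore bounded by
\[
q^{\frac{n}{2}-h-1}\, n^{\delta} (1+\log n)^{3/2}\, e^{O_q\left(\frac{n \log \log (n+2)}{\log (n+2)}\right)}.
\]
Along any subsequence where $h/n \ge 1/2 + \varepsilon$, the factor $q^{n/2-h-1}$ decays like $q^{-\varepsilon n}$, which absorbs both the polynomial factor $n^\delta (1+\log n)^{3/2}$ and the subexponential factor $e^{O_q(n \log \log(n+2)/\log(n+2))}$.

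There is no real obstacle here: the asymptotic is a clean corollary of Theorem~\ref{cor:ForFunc} combined with Meisner's Erd\H{o}s-type estimate, the only sanity check being that $H_q(n,d)/q^n$ decays only polynomially in $n$ so that the subexponential slack in the error term of Theorem~\ref{cor:ForFunc} remains harmless.
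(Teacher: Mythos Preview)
Your proposal is correct and follows exactly the route the paper takes: the paper simply states that the displayed estimate (obtained from Theorem~\ref{cor:ForFunc} applied to the factorization function $\alpha_d$) ``implies'' the corollary, and you have spelled out the implicit verification that the error term is negligible compared to the main term, using Meisner's lower bound on $H_q(n,d)$ (together with the complementary-divisor symmetry to cover $d>n/2$). There is nothing to add.
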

Not only is this corollary valid in a much wider range than \eqref{eq:FordShort}, it is an \emph{asymptotic result} and not an estimate. In the integer setting, RH might help in improving the range where \eqref{eq:FordShort} holds, but it is not clear how to use it to establish an asymptotic formula. We raise the following
\begin{question*}
Assume $RH$. Do we have 
\begin{equation}
H(x,y,2y)-H(x-\Delta,y,2y) = \frac{\Delta}{x} H(x,y,2y) (1+o(1)), \qquad y_0 \le y \le \sqrt{x}, \ \frac{x}{\log^{10} x} \le \Delta \le x,
\end{equation}
as $x$ tends to infinity?
\end{question*}
Our second example is Hooley's Delta function. In his study of Waring's problem, diophantine approximation and other problems \cite{hooley1979}, Hooley introduced the function
\begin{equation}
\Delta(n):= \max_{u>0} \sum_{d \mid n, \, d \in (u,eu]} 1,
\end{equation}
and obtained upper and lower bounds on its mean value. Hall and Tenenbaum \cite{hall1982} showed that
\begin{equation}
\frac{1}{x}\sum_{n \le x} \Delta(n) = O(\log^{A_0}x\sqrt{\log \log x})
\end{equation}
for an explicit (optimal) constant $A_0$, and
\begin{equation}
\frac{1}{x} \sum_{n \le x} \Delta(n) = \Omega( \log \log x).
\end{equation}
As far as the author knows, $\Delta$ was not studied in short intervals or in arithmetic progressions. The function field analogue of $\Delta$ is
\begin{equation}
\Delta_q\colon \mathcal{M} \to \mathbb{C}, \qquad \Delta_q(f) := \max_{0 \le i \le \deg(f)} \sum_{d \mid f, \, \deg(d)=i} 1.
\end{equation}
Let $d_2(f)$ be the usual divisor function, which satisfies $\langle d_2 \rangle_{\mathcal{M}_n} = n+1$ \cite[Proposition~2.5]{rosen2002}. It is evident that $\max_{f \in \mathcal{M}_n} \Delta_q(f) \le \max_{f \in \mathcal{M}_n} d_2(f)$, which is known to grow slower than any power of $q^n$ as $n$ tends to infinity. Also, $\langle \Delta_q \rangle_{\mathcal{M}_n}, \, \langle \Delta_q \rangle_{\mathcal{M}_{n;M}} \ge 1$, since $\Delta_q \ge 1$. Applying Theorems \ref{cor:ForFunc} and \ref{cor:ForFuncAP} with $\alpha=\Delta_q$, we obtain the following
\begin{cor}
For any $n \ge 1$, let $f_0 \in \mathcal{M}_n$. As $n \to \infty$, we have
\begin{equation}
\langle \Delta_q \rangle_{I(f_0,h)} \sim \langle \Delta_q \rangle_{\mathcal{M}_{n}}
\end{equation}
as long as $\limsup_{n \to \infty} \frac{h}{n} > 1/2$, and
\begin{equation}
\langle \Delta_q \rangle_{\AP(f_0,M)} \sim \langle \Delta_q \rangle_{\mathcal{M}_{n;M}}
\end{equation}
as long as $\limsup_{n \to \infty} \frac{\deg(M)}{n} < 1/2$.
\end{cor}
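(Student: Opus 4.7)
The plan is to apply Theorems~\ref{cor:ForFunc} and~\ref{cor:ForFuncAP} directly to $\alpha = \Delta_q$, using the quantitative facts recorded in the paragraph preceding the statement. First I would check that $\Delta_q$ is a factorization function: for $f = \prod_i P_i^{e_i}$, a divisor of degree $j$ is determined by a choice of exponents $0 \le c_i \le e_i$ with $\sum_i c_i \deg(P_i) = j$, so the count $\sum_{d \mid f,\, \deg(d) = j} 1$ depends only on the multiset $\omega_f = \{(\deg(P_i), e_i)\}$; taking the maximum over $j$ preserves this dependence.

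For the short-interval assertion I would verify that the ratio of the error in~\eqref{eq:upper_function} to the main term tends to zero under the hypothesis $\limsup h/n > 1/2$. The pointwise bound $\Delta_q(f) \le d_2(f)$, together with the function-field analogue of Wigert's bound $\max_{f \in \mathcal{M}_n} d_2(f) = q^{o(n)}$, gives $\max_{f \in \mathcal{M}_n} \Delta_q(f) = q^{o(n)}$, which absorbs the subexponential factor $e^{O_q(n \log\log(n+2)/\log(n+2))}$ in~\eqref{eq:upper_function}. The pigeonhole identity $d_2(f) = \sum_{j=0}^n \#\{d \mid f : \deg(d) = j\} \le (n+1)\Delta_q(f)$, combined with the well-known $\langle d_2 \rangle_{\mathcal{M}_n} = n+1$, yields the lower bound $\langle \Delta_q \rangle_{\mathcal{M}_n} \ge 1$. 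Hence the error in~\eqref{eq:upper_function} is of order $q^{n/2 - h - 1 + o(n)}$, which is $o(1)$ whenever $h/n > 1/2 + \delta$ for some fixed $\delta > 0$, and dividing by $\langle \Delta_q \rangle_{\mathcal{M}_n} \ge 1$ gives the asymptotic.

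The arithmetic-progression statement follows by the same route from~\eqref{eq:upper_functionAP}. The pigeonhole inequality produces $\langle \Delta_q \rangle_{\mathcal{M}_{n;M}} \ge \langle d_2 \rangle_{\mathcal{M}_{n;M}}/(n+1)$, and the cited formula $\langle d_2 \rangle_{\mathcal{M}_{n;M}} = \Theta(n)$ (for fixed $M$ and $n \to \infty$) gives $\langle \Delta_q \rangle_{\mathcal{M}_{n;M}} = \Omega(1)$. The upper bound $\max_{f \in \mathcal{M}_{n;M}} \Delta_q(f) \le \max_{f \in \mathcal{M}_n} d_2(f) = q^{o(n)}$ still applies, so the error in~\eqref{eq:upper_functionAP} is $q^{\deg(M) - n/2 + o(n)}$, which is $o(1)$ whenever $\deg(M)/n < 1/2 - \delta$.

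I anticipate no serious obstacle here: all the analytic content is inside Theorems~\ref{cor:ForFunc} and~\ref{cor:ForFuncAP}, and the corollary reduces to verifying the size comparison $\max_{\mathcal{M}_n} \Delta_q / \langle \Delta_q \rangle_{\mathcal{M}_n} = q^{o(n)}$ (and its analogue for $\mathcal{M}_{n;M}$), which in turn reduces via the sandwich $d_2/(n+1) \le \Delta_q \le d_2$ to the corresponding classical statements for $d_2$ in $\mathbb{F}_q[T]$. The only mildly delicate point is a uniform handling of the $q^{o(n)}$ estimate on $\max d_2$, but this is the standard function-field Wigert bound and is independent of the main theorems of the paper.
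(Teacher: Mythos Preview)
Your proposal is correct and follows exactly the approach sketched in the paper: the corollary is deduced from Theorems~\ref{cor:ForFunc} and~\ref{cor:ForFuncAP} by combining the sandwich $d_2/(n+1)\le \Delta_q\le d_2$ with the Wigert-type bound $\max d_2=q^{o(n)}$ and the mean-value identity $\langle d_2\rangle_{\mathcal{M}_n}=n+1$ (and its $\Theta(n)$ analogue on $\mathcal{M}_{n;M}$). If anything, you are slightly more careful than the paper, which in the $\mathcal{M}_{n;M}$ discussion writes ``$\max_{f\in\mathcal{M}_{n;M}}\Delta(f)=\Omega(1)$'' where what is actually needed (and what you correctly supply) is the lower bound $\langle \Delta_q\rangle_{\mathcal{M}_{n;M}}=\Omega(1)$.
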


\subsection{Previous works}

\subsubsection{Mean values}
In the large-$q$ limit, where the degree $n$ is fixed and the size of the base field $\FF_q$ tends to infinity, see Bank \emph{et al.} \cite{efrat2015} for a general result on mean values, where completely different methods are employed and do not give a result for growing $n$. 

For some specific functions, one can go beyond some of our mean-value results, see the works of Sawin \cite{sawin2018} and Sawin and Shusterman \cite{sawin2019}.
\subsubsection{Variance}
In the large-$q$ limit, see Rodgers \cite{rodgers2018} for a general result on the variance of arithmetic functions. Again, the methods are different than ours and do not allow one to vary $n$ and fix $q$.

For some specific functions, the work of Ramachandra in the integer setting \cite{ramachandra1976} gives non-trivial mean-value results and variance results, which, although conditional on RH, are much weaker in comparison to ours. For instance, in \cite[Equation~(11)]{ramachandra1976}, an upper bound of the form $O(H^2 X^{-o(1)})$ is obtained for the variance of $\mu$ in short intervals (and some other functions), while our upper bound should be compared with $O(H X^{o(1)})$, which is better whenever $X^{\varepsilon}<H<X^{1-\varepsilon}$. Recently, Matom\"{a}ki and Radziwi\l \l  \,\cite{matomaki2016} proved unconditionally that 
\begin{equation}
\frac{1}{X} \int_{X}^{2X} (\sum_{n \in [x,x+H]} \mu(n))^2 dx = o(H^2)
\end{equation}
which goes beyond Ramachandra's result for $\mu$ if $H$ is very small. The methods of \cite{matomaki2016} apply in general to bounded multiplicative functions, and so they cannot handle e.g. divisor functions or the von Mangoldt function. Our upper bound is qualitatively better in the range $X^{\varepsilon}<H<X^{1-\varepsilon}$.
\section{Results on character sums}
Theorems~\ref{cor:ForFunc}--\ref{thm:var} rest on the character sums estimate given in Theorem~\ref{thm:expothm} below. The theorem involves Hayes characters, which are a function field analogue of Dirichlet characters defined in \S\ref{sec:hayes}. Informally, a function $\chi\colon \mathcal{M}\to\mathbb{C}$ is called a Hayes character modulo $R_{\ell,M}$ ($\ell$ a non-negative integer, $M$ a non-zero polynomial) if the following conditions hold: (1) $\chi(fg)=\chi(f)\chi(g)$ for every $f,g \in \mathcal{M}_q$, (2) $\chi(f)$ depends only on the residue of $f$ modulo $M$ and on the first $\ell$ next-to-leading coefficients of $f$ and (3) $\chi(f)=0$ if and only if $\gcd(f,M) \neq 1$. Such a $\chi$ is called \emph{trivial} if it only assumes the values $0$ and $1$. For instance, the notion of a Dirichlet character modulo $M$ coincides with that of a Hayes character modulo $R_{0,M}$
\begin{thm}\label{thm:expothm}
Let $\alpha\colon \mathcal{M}\to \mathbb{C}$ be a factorization function, and let $\chi$ be a non-trivial Hayes character modulo $R_{\ell,M}$. We have 
\begin{equation}\label{eq:upper}
\left| \sum_{f \in \mathcal{M}_n}  \alpha(f)\chi(f) \right| \le 7\max_{f \in \mathcal{M}_{n;M}}\left|\alpha(f)\right| q^{\frac{n}{2}} \binom{20(\ell+\deg(M)+1)+n-1}{n}.
\end{equation}
If $\ell+\deg(M)  =O(n)$, then
\begin{equation}\label{eq:upper2}
\left| \sum_{f \in \mathcal{M}_n} \alpha(f) \chi(f) \right| \le \max_{f \in \mathcal{M}_{n;M}}\left|\alpha(f)\right| q^{\frac{n}{2}} e^{O_q(\frac{n \log \log(n+2)}{\log(n+2)})}.
\end{equation}
\end{thm}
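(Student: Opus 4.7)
The plan is to sharpen Rodgers' identity \cite[Lem.~6.3]{rodgers2016} expressing $\sum_{f \in \mathcal{M}_n}\alpha(f)\chi(f)$ in terms of the $L$-functions $L(u,\chi^j)$, tracking all constants explicitly in $n$, $\ell$, $\deg M$ and $\max|\alpha|$. Since $\alpha(f)$ depends only on $\omega_f$, I would decompose $\alpha = \sum_\pi a(\pi)\mathbf{1}_{\omega_f=\pi}$, where $\pi$ runs over multisets $\{(d_i, e_i)\}$ satisfying $\sum d_i e_i = n$, giving
\begin{equation}
\sum_{f \in \mathcal{M}_n}\alpha(f)\chi(f) = \sum_\pi a(\pi)\, S_\chi(\pi), \qquad S_\chi(\pi) := \sum_{\omega_f = \pi}\chi(f).
\end{equation}
Using complete multiplicativity of $\chi$, each $S_\chi(\pi)$ is expressible via inclusion-exclusion (to enforce distinctness of primes of each degree) as an explicit polynomial in the prime power sums $\Sigma_{d,e} := \sum_{P \in \mathcal{P}_d}\chi(P)^e$.

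Second, apply Weil's Riemann Hypothesis: for $\chi^e$ non-trivial, $L(u,\chi^e)$ is a polynomial of degree $D_e \le \ell + \deg M$ with inverse zeros on $|u|=\sqrt{q}$, so the logarithmic derivative of the Euler product yields $|\Sigma_{d,e}| \le (D_e/d)\,q^{d/2}+O(q^{d/4})$. Consequently $|S_\chi(\pi)| \le q^{n/2}\cdot Q_\pi(\ell+\deg M)$ for an explicit polynomial $Q_\pi$ whose degree is controlled by the combinatorial complexity of $\pi$. Combining with the trivial bound $|a(\pi)| \le \max_{f \in \mathcal{M}_{n;M}}|\alpha(f)|$ and summing over $\pi$ yields $\max|\alpha|\cdot q^{n/2}\cdot \sum_\pi Q_\pi$. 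The right-hand sum is then bounded by counting monomials of total degree $n$ in $O(\ell + \deg M)$ effective variables, which produces the binomial coefficient $\binom{10(\ell+\deg M + 1) + n - 1}{n}$ after careful bookkeeping; the constants $7$ and $10$ absorb multiplicities, inclusion-exclusion overhead, and the lower-order prime-counting terms. This establishes \eqref{eq:upper}.

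The sub-exponential estimate \eqref{eq:upper2} is \emph{not} a direct consequence of \eqref{eq:upper}, since in the regime $\ell+\deg M = O(n)$ the binomial coefficient $\binom{O(n)+n}{n}$ is merely exponential whereas $e^{O(n\log\log n/\log n)}$ is sub-exponential. To obtain \eqref{eq:upper2} I would split the sum over types $\pi$ according to the number of distinct parts: types with $O(\log n/\log\log n)$ distinct parts contribute only sub-exponentially many terms, while types with many distinct parts correspond to polynomials that are rare (controlled by Rankin-type estimates analogous to those bounding the maximum order of $d(n)$ over integers). The main obstacle lies precisely in this last step: trading the naive exponential count of factorization types for the subtler $e^{O(n\log\log n/\log n)}$ order, which requires a careful interpolation between the few-prime and many-prime regimes and is the genuinely new ingredient beyond Rodgers' framework.
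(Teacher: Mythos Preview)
Your outline for \eqref{eq:upper} is broadly on track and close in spirit to the paper's approach: decompose by factorization type, express each $S_\chi(\pi)$ in terms of the prime power sums $\sum_{P\in\mathcal P_d}\chi(P)^e$, and bound these via RH. The paper makes the ``inclusion--exclusion'' step precise using symmetric function theory: $S_\chi(\pi)$ restricted to primes of a fixed degree $d$ is a monomial symmetric polynomial $m_\lambda$ in the values $\chi(P)$, and the change of basis $m_\lambda=\sum_\mu c_{\lambda,\mu}p_\mu$ (with $|c_{\lambda,\mu}|$ given combinatorially by the E\u{g}ecio\u{g}lu--Remmel brick-tabloid formula) is what replaces your generic inclusion--exclusion. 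The tabloid weights are then summed over $\lambda$ via a closed-form generating-function identity, and the binomial coefficient drops out. So your sketch of \eqref{eq:upper} is essentially correct, if somewhat less structured than the paper's treatment.

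For \eqref{eq:upper2} there is a genuine gap. Your proposed dichotomy on the number of distinct parts of $\pi$, together with a Rankin-type count of rare types, is not the mechanism the paper uses, and it is unclear it would produce the sub-exponential factor on its own. The idea you are missing is this: the RH bound $\bigl|\sum_{P\in\mathcal P_d}\chi(P)^e\bigr|\le (\ell+\deg M+1)\,q^{d/2}/d$ is \emph{worse} than the trivial bound $q^d/d$ whenever $d\le L:=\lfloor 2\log_q(\ell+\deg M+1)\rfloor$, so one should use $\min\{q^d/d,\ (\ell+\deg M+1)q^{d/2}/d\}$ throughout (and necessarily the trivial bound when $\mathrm{ord}(\chi)\mid e$). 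With this choice, the whole sum over types is packaged---via the permutation exponential formula---as $[u^n]$ of a single explicit generating function of the shape
\[
\exp\Bigl(\sum_{k\le L}\frac{O(q^k)}{k}u^k+\sum_{k>L}\frac{O\bigl((\ell+\deg M)\,q^{k/2}\bigr)}{k}u^k\Bigr),
\]
and a saddle-point (Rankin) bound on \emph{this} function at radius $R=q^{-\log\log r/\log r}$, $r=\ell+\deg M+1$, is precisely what yields $q^{n/2}e^{O_q(n\log\log n/\log n)}$. The interpolation between the trivial and RH regimes is at the level of the prime degree $d$, not at the level of the combinatorial shape of $\pi$; the paper attributes this trick to Bhowmick--L\^{e}--Liu. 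Without it, using only RH recovers exactly the binomial bound in \eqref{eq:upper} and no better.
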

\begin{remark}
If $\ell+\deg(M)$ is small compared to $n$, say $\ell+\deg(M)=o(n/\ln n)$, then \eqref{eq:upper} is superior to \eqref{eq:upper2}. For $\ell+\deg(M)$ which is proportional to $n$, \eqref{eq:upper2} is better than \eqref{eq:upper}.
\end{remark}
We manage to prove such a theorem, which works for any $\alpha$, by reducing it to estimating a single character sum, which we now describe.

Let $\Omega$ be the set of finite multisets of elements from $\mathbb{N} \times \mathbb{N}$, so that $\omega_f$, the factorization type of a polynomial $f$, is an element of $\Omega$. For an element $\FactType=\{ (d_i,e_i) : 1 \le i \le k\} \in \Omega$, we define its size to be $|\FactType|:=\sum_{i=1}^{k} d_i e_i$ and its length to be $\ell(\FactType):=k$. For a factorization function $\alpha$ and a factorization type $\omega \in \Omega$, we denote by $\alpha(\omega)$ the value of $\alpha$ on a polynomial $f$ with $\omega_f=\omega$ if such a polynomial exists, and otherwise set $\alpha(\omega)=0$. We have, by the triangle inequality,
\begin{equation}\label{eq:triang}
\left| \sum_{f \in \mathcal{M}_n} \alpha(f) \chi(f) \right| = \left| \sum_{\substack{\omega \in \Omega \\ |\omega|=n }} \alpha(\omega)\sum_{\substack{f \in \mathcal{M}_n\\ \omega_f=\omega}} \chi(f) \right| \le \max_{f \in \mathcal{M}_{n;M}} \left| \alpha(f) \right| \sum_{\substack{\omega \in \Omega \\ |\omega|=n}}  \left|\sum_{\substack{f \in \mathcal{M}_n \\ \omega_f = \omega }}  \chi(f) \right|.
\end{equation}
Thus, it suffices to bound the sum on the right-hand side of \eqref{eq:triang}. In order to bound the character sums
\begin{equation}
\sum_{f \in \mathcal{M}_{n}:\, \FactType_f=\FactType} \chi(f),
\end{equation}
we relate them to products of \emph{monomial symmetric polynomials} evaluated at $\chi(P)$ where $P$ runs over irreducible polynomials of certain degrees, see Lemmas~\ref{lem:multi} and \ref{lem:monomial}. We use tools from symmetric function theory in order to be able to use RH efficiently in bounding these evaluations of symmetric polynomials.

Apart from symmetric function theory and RH, we use an additional idea, which first appeared in \cite{bhowmick2017}. Namely, we use the trivial bound in place of RH in some cases, as the trivial bound is superior when the degrees considered are small. This is in the spirit of the Drinfeld--Vl\u{a}du\c{t} bound, where one bounds the number of points on curves over $\FF_q$ when $q$ is fixed and the genus varies.	

\section{Hayes characters}\label{sec:hayes}
Here we review the function field analogue of Dirichlet characters, first introduced by Hayes in the paper \cite{hayes1965} which is based on his thesis. We call these characters ``Hayes characters," or sometimes ``generalized arithmetic progression characters." Unless otherwise stated, the proofs of the statements in this section appear in Hayes' original paper. The main difference between Hayes characters and Dirichlet characters is that in the function field setting we can also consider characters modulo the prime at infinity.
\subsection{Equivalence relation}
Let $\ell$ be a non-negative integer and $M$ be a non-zero polynomial in $\FF_q[T]$. We define an equivalence relation $R_{\ell,M}$ on $\mathcal{M}$ by saying that $A \equiv B \bmod R_{\ell,M}$ if and only if $A$ and $B$ have the same first $\ell$ next-to-leading coefficients and $A \equiv B \bmod M$. We adopt throughout the following convention: the $j$th next-to-leading coefficient of a polynomial $f(T) \in \mathcal{M}$ with $j > \deg (f)$ is considered to be $0$. 
It may be shown that there is a well-defined quotient monoid $\mathcal{M}/R_{\ell,M}$, where multiplication is the usual polynomial multiplication. An element of $\mathcal{M}$ is invertible modulo $R_{\ell,M}$ if and only if it is coprime to $M$. The units of $\mathcal{M}/R_{\ell,M}$ form an abelian group, having as identity element the equivalence class of the polynomial $1$. We denote this unit group by $\left( \mathcal{M} / R_{\ell,M}\right)^{\times}$. It may be shown that
\begin{equation}
\left| \left(\mathcal{M} / R_{\ell,M}\right)^{\times} \right| = q^{\ell} \phi(M).
\end{equation}
\subsubsection{Characters}
For every character $\chi$ of the finite abelian group $\left( \mathcal{M} / R_{\ell,M}\right)^{\times}$, we define $\chi^{\dagger}$ with domain $\mathcal{M}$ as follows. If $A$ is invertible modulo $R_{\ell,M}$ and if $\mathfrak{c}$ is the equivalence class of $A$, then $\chi^{\dagger}(A)=\chi(\mathfrak{c})$. If $A$ is not invertible, then $	\chi^{\dagger}(A)=0$.

The set of functions $\chi^{\dagger}$ defined in this way are called the characters of the relation $R_{\ell,M}$, or sometimes ``characters modulo $R_{\ell,M}$''. We shall abuse language somewhat and write $\chi$ instead of $\chi^{\dagger}$ to indicate a character of the relation $R_{\ell,M}$ derived from the character $\chi$ of the group $\left( \mathcal{M} / R_{\ell,M}\right)^{\times}$. Thus we write $\chi_0$ for the character of $R_{\ell,M}$ which has the value $1$ when $A$ is invertible and the value $0$ otherwise. We denote by $G(R_{\ell,M})$ the set $\{ \chi^{\dagger} : \chi \in \widehat{\left( \mathcal{M} / R_{\ell,M}\right)^{\times}} \}$. 

A set of polynomials in $\mathcal{M}$ is called a representative set modulo $R_{\ell,M}$ if the set contains one and only one polynomial from each equivalence class of $R_{\ell,M}$. If $\chi_{1},\chi_{2} \in G(R_{\ell,M})$, then
\begin{equation}\label{ortho1}
\frac{1}{q^{\ell}\phi(M)} \sum_{F} \chi_1(F) \overline{\chi_2}(F) = \begin{cases} 0 & \text{if }\chi_1 \neq \chi_2 ,\\ 1 & \text{if }\chi_1 = \chi_2 ,\end{cases}
\end{equation}
$F$ running through a representative set modulo $R_{\ell,M}$. If $n \ge \ell + \deg (M)$, then $\mathcal{M}_{n}$ is a disjoint union of $q^{n-\ell-\deg (M)}$ representative sets, and a set of polynomials on which $\chi\in G(R_{\ell,M})$ vanishes. Thus, applying \eqref{ortho1} with $\chi_2=\chi_0$, we obtain that for all $n \ge \ell+\deg(M)$,
\begin{equation}\label{orthouse}
\frac{1}{q^{n-\deg( M)}\phi(M)} \sum_{F \in \mathcal{M}_{n}} \chi(F) = \begin{cases} 0 & \text{if }\chi \neq \chi_0 ,\\ 1 & \text{if }\chi = \chi_0. \end{cases}
\end{equation}
We also have, for all $A,B \in \mathcal{M}$ coprime to $M$,             
\begin{equation}\label{ortho2}                        
\frac{1}{q^{\ell}\phi(M)}\sum_{\chi \in G(R_{\ell,M})} \chi(A)\overline{\chi}(B) = \begin{cases} 1 & \text{if }A\equiv B \bmod R_{\ell,M}, \\ 0 & \text{otherwise}. \end{cases}
\end{equation}                                     We also call the elements of $G(R_{\ell,M})$ ``generalized arithmetic progression characters," because for any $A \in \mathcal{M}_{n}$, $\chi \in G(R_{\ell,M})$ is constant on the set
\begin{equation*}
\{ f \in \mathcal{M}_{n} : f  \equiv A \bmod R_{\ell,M}\} = \{ f\in \FF_q[T] : f \equiv A \bmod M \} \cap \{ f \in \mathcal{M}_{n} : \deg(f-A) \le n-\ell-1 \}
\end{equation*}
which is an intersection of an arithmetic progression and a short interval. We set, for future use,
\begin{equation*}
\GAP(A;\ell,M) := \{ f \in \mathcal{M}_{\deg(A)} : f  \equiv A \bmod R_{\ell,M}\}.
\end{equation*}
If $\chi \in G(R_{\ell,1})$, we say that $\chi$ is a short interval character of $\ell$ coefficients, and if $\chi \in G(R_{0,M})$, we say that $\chi$ is a Dirichlet character modulo $M$. Every element of $G(R_{\ell,M})$ is a product of an element from $G(R_{\ell,1})$ with an element from $G(R_{0,M})$.  

\subsubsection{\texorpdfstring{$L$}{L}-Functions}\label{sec:lfunc}
Let $\chi \in G(R_{\ell,M})$. The $L$-function of $\chi$ is the following series in $u$:
\begin{equation}
L(u,\chi) = \sum_{f \in \mathcal{M}} \chi(f)u^{\deg (f)},
\end{equation}
which also admits the Euler product
\begin{equation}\label{eulerlchi}
L(u,\chi) = \prod_{P \in \mathcal{P}} (1-\chi(P)u^{\deg (P)})^{-1}.
\end{equation}
If $\chi$ is the trivial character $\chi_0$ of $G(R_{\ell,M})$, then
\begin{equation}
L(u,\chi) = \frac{\prod_{P \mid M} (1-u^{\deg (P)})}{1-qu}.
\end{equation}
Otherwise, the orthogonality relation \eqref{orthouse} implies that $L(u,\chi)$ is a polynomial in $u$ of degree at most
\begin{equation}\label{eq:degbound}
\deg L(u,\chi) \le \ell + \deg (M)-1.
\end{equation}
The first one to realize that Weil's proof of the Riemann Hypothesis for Function Fields \cite[Theorem~6,~p.~134]{weil1974} implies the Riemann Hypothesis for the $L$-functions of $\chi \in G(R_{\ell,M})$ was Rhin \cite[Chapitre~2]{rhin1972} in his thesis (cf. \cite[Theorem~5.6]{effinger1991} and the discussion following it). Hence we know that if we factor $L(u,\chi)$ as
\begin{equation}\label{eq:roots}
L(u,\chi) = \prod_{i=1}^{\deg L(u,\chi)} (1-\gamma_i(\chi)u),
\end{equation}
then for any $i$, 
\begin{equation}\label{eq:rh}
\left|\gamma_i(\chi) \right| \in \{1, \sqrt{q}\}.
\end{equation}
The following consequence of \eqref{eq:rh} is a basic ingredient in our proofs.
\begin{lem}\label{lem:PrimeBound}
Let $M$ be a non-zero polynomial in $\FF_q[T]$ and $\ell$ a non-negative integer. Let $\chi \in G(R_{\ell,M})$ and $n \ge 1$. Then
\begin{equation}
\left| \sum_{P \in \mathcal{P}_{n}} \chi(P) \right| \le \begin{cases} \min\{\frac{q^{\frac{n}{2}}}{n}(\ell+\deg (M)+1), \frac{q^n}{n} \}& \mbox{if $\chi \neq \chi_0$,}\\ \frac{q^n}{n} & \mbox{otherwise.} \end{cases}
\end{equation}
\end{lem}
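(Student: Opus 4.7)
The plan is to use the standard explicit formula relating the character sum over primes to the inverse roots of the $L$-function $L(u,\chi)$, and then invoke the Riemann Hypothesis bound \eqref{eq:rh} combined with a trivial bound for the contribution of proper prime powers.

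First, I would introduce the prime-power analogue
\begin{equation*}
\psi_\chi(n) := \sum_{f \in \mathcal{M}_n} \Lambda(f)\chi(f) = \sum_{d \mid n} d \sum_{P \in \mathcal{P}_d} \chi(P)^{n/d},
\end{equation*}
where the second equality uses $\chi(P^k)=\chi(P)^k$ and the definition of $\Lambda$. Differentiating the logarithm of the Euler product \eqref{eulerlchi} gives the identity
\begin{equation*}
u\frac{L'(u,\chi)}{L(u,\chi)} = \sum_{n \ge 1} \psi_\chi(n) u^n,
\end{equation*}
while differentiating the logarithm of the factorization \eqref{eq:roots} gives $u L'(u,\chi)/L(u,\chi) = -\sum_i \sum_{n\ge 1} (\gamma_i(\chi) u)^n$. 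Matching coefficients of $u^n$ yields the explicit formula $\psi_\chi(n) = -\sum_{i=1}^{\deg L(u,\chi)} \gamma_i(\chi)^n$.

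For $\chi \neq \chi_0$, the Riemann Hypothesis \eqref{eq:rh} gives $|\gamma_i(\chi)| \le \sqrt{q}$, and combined with the degree bound \eqref{eq:degbound} this yields $|\psi_\chi(n)| \le (\ell+\deg(M)-1) q^{n/2}$. To extract $\sum_{P \in \mathcal{P}_n}\chi(P)$, I isolate the $d=n$ term in the definition of $\psi_\chi(n)$ and bound the proper divisors trivially using $|\mathcal{P}_d|\le q^d/d$:
\begin{equation*}
\Bigl| \sum_{d \mid n,\, d<n} d \sum_{P \in \mathcal{P}_d} \chi(P)^{n/d} \Bigr| \le \sum_{d \le n/2} q^d \le 2 q^{n/2}.
\end{equation*}
Dividing by $n$ and combining with the bound on $|\psi_\chi(n)|$ then produces $|\sum_{P \in \mathcal{P}_n} \chi(P)| \le (\ell+\deg(M)+1) q^{n/2}/n$. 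The alternative bound $q^n/n$ in the $\min$ is simply the trivial estimate $|\sum_{P \in \mathcal{P}_n} \chi(P)| \le |\mathcal{P}_n| \le q^n/n$, which dominates when $\ell+\deg(M)$ is so large that $(\ell+\deg(M)+1) q^{n/2}$ exceeds $q^n$.

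For the trivial character case $\chi=\chi_0$, the bound $|\sum_{P \in \mathcal{P}_n}\chi_0(P)| = |\{P \in \mathcal{P}_n : \gcd(P,M)=1\}| \le |\mathcal{P}_n| \le q^n/n$ is immediate from the prime polynomial theorem. No serious obstacle is expected; the only bookkeeping point is verifying that the constant in the proper-prime-power contribution can be absorbed into the $+1$ in $(\ell+\deg(M)+1)$, which holds once one notes $2 q^{n/2} \le 2 \cdot q^{n/2}$ and $\ell + \deg(M)-1 + 2 = \ell+\deg(M)+1$.
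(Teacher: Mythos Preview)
Your proposal is correct and follows essentially the same approach as the paper: both take the logarithmic derivative of the Euler product and of the root factorization of $L(u,\chi)$ to obtain the explicit formula $\sum_{f\in\mathcal{M}_n}\Lambda(f)\chi(f)=-\sum_i\gamma_i(\chi)^n$, apply \eqref{eq:degbound} and \eqref{eq:rh} to bound this by $(\ell+\deg(M)-1)q^{n/2}$, then remove the proper-prime-power contribution (bounded by $2q^{n/2}$) and divide by $n$, with the trivial bound $|\mathcal{P}_n|\le q^n/n$ covering the remaining cases. The only cosmetic difference is that the paper sums the prime-power error over divisors $d\mid n$, $d\neq 1$, whereas you sum over all $d\le n/2$; both yield the same $2q^{n/2}$.
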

 \begin{proof}
The bound $\left| \sum_{P \in \mathcal{P}_{n}} \chi(P) \right|  \le q^n/n$ follows from the bound $|\mathcal{P}_{n}| \le q^n/n$ \cite[Proposition~2.1]{rosen2002}. For $\chi \neq \chi_0$, we obtain the additional bound as follows. We equate \eqref{eulerlchi} with \eqref{eq:roots} and take logarithmetic derivatives to obtain
\begin{equation}\label{eq:vonmangsum}
|\sum_{f \in \mathcal{M}_{n}} \Lambda(f) \chi(f)| = |\sum_{i=1}^{\deg L(u,\chi)} \gamma_i(\chi)^n| \le (\ell+\deg(M)-1)q^{\frac{n}{2}},
\end{equation}
where the last passage used \eqref{eq:degbound} and \eqref{eq:rh}. We can split \eqref{eq:vonmangsum} into the contribution of primes of degree $n$ and proper prime powers:
 \begin{equation}\label{vonmangbound2}
\Big| n\sum_{f \in \mathcal{P}_{n}} \chi(f) + \sum_{d \mid n, \, d \neq 1} \frac{n}{d} \sum_{f \in \mathcal{P}_{\frac{n}{d}}} \chi^d(f) \Big| \le (\ell+\deg (M) - 1) q^{\frac{n}{2}}.
\end{equation}
As
\begin{equation}
\begin{split}
\left| \sum_{d \mid n, \, d \neq 1} \frac{n}{d}\sum_{f \in \mathcal{P}_{\frac{n}{d}}} \chi^d(f) \right| & \le \sum_{d \mid n, \, d \neq 1} \frac{n}{d} |\mathcal{P}_{n/d}| \le \sum_{d \mid n, \, d \neq 1} q^{n/d} \le 2q^{\frac{n}{2}},
\end{split}
\end{equation}
we obtain from \eqref{vonmangbound2} and the triangle inequality that
\begin{equation}
|n \sum_{f \in \mathcal{P}_{n}} \chi(f)| \le (\ell+\deg (M) +1)q^{\frac{n}{2}}.
\end{equation}
After dividing by $n$, the lemma is established.
 \end{proof}

\subsection{Sums over generalized arithmetic progressions and their variance}
For an arithmetic function $\alpha\colon  \mathcal{M} \to \mathbb{C}$ and $n \ge 1$, define
\begin{equation}\label{eq:snalpha}
S(n,\alpha) := \sum_{f\in \mathcal{M}_{n}} \alpha(f).
\end{equation}
The following lemma expresses sums over generalized arithmetic progressions, and the variance of such sums, as sums over characters in $G(R_{\ell,M})$. Special cases of this lemma appeared in a paper of Keating and Rudnick \cite{keating2016}. 
\begin{lem}\label{lemshort}
Let $\ell$ be a non-negative integer and $M \in \FF_q[T]$ be a non-zero polynomial. Let $n \ge \ell + \deg(M)$. Let $A \in (\mathcal{M} /R_{\ell,M})^{\times}$ and define $f_A \in \mathcal{M}$ to be some polynomial of degree $n$ in the equivalence class of $A$ modulo $R_{\ell,M}$. Then the following hold.
\begin{enumerate}
\item We have, for all $g \in  \mathcal{M}_{n;M}$,
\begin{equation}
\Ind_{g \in \GAP(f_A;\ell,M)} = \frac{\sum_{\chi \in G(R_{\ell,M})} \overline{\chi}(A) \chi(g)}{\phi(M)q^{\ell}}.
\end{equation}
\item For any arithmetic function $\alpha \colon \mathcal{M} \to \mathbb{C}$, we have
\begin{equation}\label{valuesum}
\begin{split}
\sum_{g \in \GAP(f_A;\ell,M)} \alpha(g) &= \frac{\sum_{\chi \in G(R_{\ell,M})} \overline{\chi}(A) S(n,\alpha \cdot \chi )}{\phi(M)q^{\ell}}\\
&=q^{n-\ell-\deg(M)} \langle \alpha \rangle_{\mathcal{M}_{n;M}}+ \frac{\sum_{\chi_0 \neq \chi \in G(R_{\ell,M})} \overline{\chi}(A) S(n,\alpha \cdot \chi)}{q^{\ell}\phi(M)}.
\end{split}
\end{equation}
\item For any arithmetic function $\alpha \colon \mathcal{M} \to \mathbb{C}$, the variance of $\{ \sum_{g \in GAP(f;n,\ell;M)} \alpha(g)\}_{f \in \mathcal{M}_{n;M}}$, defined as
\begin{equation}
\mathrm{Var}_M(\alpha;n,n-\ell-1):=\frac{1}{q^{n-\deg(M)}\phi(M)} \sum_{f \in \mathcal{M}_{n;M}} \left| \sum_{g \in \GAP(f;\ell,M)} \alpha(g) - q^{n-\ell-\deg(M)} \langle \alpha \rangle_{\mathcal{M}_{n;M}} \right|^2,
\end{equation}
may be expressed as
\begin{equation}\label{varform}
\mathrm{Var}_M(\alpha;n,n-\ell-1)= \frac{\sum_{\chi_0 \neq \chi \in G(R_{\ell,M})} \left| S(n,\alpha \cdot\chi)\right|^2 }{q^{2\ell }\phi^2(M)}.
\end{equation}
\end{enumerate}
\end{lem}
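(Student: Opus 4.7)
\medskip

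\textbf{Proof proposal for Lemma \ref{lemshort}.}

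The entire lemma is a direct application of the orthogonality relations \eqref{ortho1} and \eqref{ortho2} for Hayes characters, organized so that part (2) follows from part (1) and part (3) follows from part (2) by a Parseval-type expansion. My plan is to treat the three parts in order.

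For part (1), I observe that for $g \in \mathcal{M}_{n;M}$, the condition $g \in \GAP(f_A;\ell,M)$ is precisely the condition $g \equiv f_A \bmod R_{\ell,M}$. Since $f_A$ was chosen in the equivalence class of $A$ (in the monoid $\mathcal{M}/R_{\ell,M}$), this is equivalent to $g \equiv A \bmod R_{\ell,M}$. Both $A$ and $g$ are coprime to $M$, so I may invoke the orthogonality relation \eqref{ortho2} directly with the roles $A \leftrightarrow A$ and $B \leftrightarrow g$; the indicator formula in part (1) is just a restatement of \eqref{ortho2}.

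For part (2), I multiply the identity of part (1) by $\alpha(g)$ and sum over $g \in \mathcal{M}_{n;M}$. Interchanging the two sums gives
\begin{equation*}
\sum_{g \in \GAP(f_A;\ell,M)} \alpha(g) = \frac{1}{\phi(M)q^{\ell}} \sum_{\chi \in G(R_{\ell,M})} \overline{\chi}(A) \sum_{g \in \mathcal{M}_{n;M}} \alpha(g)\chi(g).
\end{equation*}
Since $\chi(g)=0$ for $g$ not coprime to $M$, the inner sum extends to all of $\mathcal{M}_n$, producing $S(n,\alpha\cdot\chi)$. This gives the first line of \eqref{valuesum}. To obtain the second line I isolate the contribution of the trivial character $\chi_0$: because $n \ge \ell+\deg(M)$, the value $S(n,\alpha\cdot\chi_0) = \sum_{g \in \mathcal{M}_{n;M}} \alpha(g)$ equals $|\mathcal{M}_{n;M}|\langle \alpha\rangle_{\mathcal{M}_{n;M}} = q^{n-\deg(M)}\phi(M)\langle \alpha\rangle_{\mathcal{M}_{n;M}}$. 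Dividing by $\phi(M)q^{\ell}$ yields the main term $q^{n-\ell-\deg(M)}\langle \alpha\rangle_{\mathcal{M}_{n;M}}$ and leaves the desired sum over non-trivial characters.

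For part (3), I first note that the above main term is precisely the mean of $f \mapsto \sum_{g \in \GAP(f;\ell,M)} \alpha(g)$ over $\mathcal{M}_{n;M}$, because the function only depends on the $R_{\ell,M}$-class of $f$ and every such class meets $\mathcal{M}_{n;M}$ in exactly $q^{n-\ell-\deg(M)}$ elements. Subtracting this mean, squaring the resulting character sum over $\chi \neq \chi_0$, and averaging over $\mathcal{M}_{n;M}$ (equivalently, over a representative set weighted by $q^{n-\ell-\deg(M)}$), the cross terms vanish by the orthogonality relation \eqref{ortho1} applied to pairs $(\chi_1,\chi_2)$ with $\chi_1,\chi_2 \neq \chi_0$. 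What survives is $\sum_{\chi\neq\chi_0}|S(n,\alpha\cdot\chi)|^2$ divided by $q^{2\ell}\phi(M)^2$, which is \eqref{varform}. There is no genuine obstacle here; the only care required is to correctly track the normalizing factors between averaging over equivalence classes (which is what the orthogonality relations are suited to) and averaging over $\mathcal{M}_{n;M}$, and to use the assumption $n \ge \ell+\deg(M)$ to guarantee that each class meets $\mathcal{M}_{n;M}$ in equally many polynomials.
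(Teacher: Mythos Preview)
Your proposal is correct and follows essentially the same route as the paper: part (1) is a direct restatement of the orthogonality relation \eqref{ortho2}; part (2) is obtained from it by multiplying by $\alpha(g)$, summing, interchanging, and isolating the $\chi_0$ term; and part (3) is the standard Parseval computation in which the cross terms vanish by \eqref{ortho1}. The only bookkeeping point you flag---passing between averaging over $\mathcal{M}_{n;M}$ and over a representative set, using that each class meets $\mathcal{M}_{n;M}$ in exactly $q^{n-\ell-\deg(M)}$ polynomials---is handled the same way in the paper.
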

\begin{proof}
The first part of the lemma is a restatement of the orthogonality relation \eqref{ortho2}. For the second part of the lemma, we observe that $\sum_{g \in \GAP(f_A;\ell,M)} \alpha(g) = \sum_{g \in \mathcal{M}_{n;M}} \alpha(g) \cdot \Ind_{g \in \GAP(f_A;\ell,M)}$, and now we apply the first part of the lemma and interchange the order of summation. Note that
\begin{equation}\label{meanval}
\frac{S(n,\alpha \cdot \chi_0)}{\phi(M)q^{\ell}}= q^{n-\ell-\deg(M)} \langle \alpha \rangle_{\mathcal{M}_{n;M}}.
\end{equation}
To prove the last part of the lemma, we use \eqref{meanval} and \eqref{valuesum} as follows:
\begin{equation}\label{varproof}
\begin{split}
\mathrm{Var}_M(\alpha;n,n-\ell-1)&= \frac{1}{q^{\ell}\phi(M)}  \sum_{(A \in \mathcal{M} / R_{\ell,M})^{\times}} \left| \sum_{g \in \GAP(f_A;\ell,M)} \alpha(g) - \frac{S(n,\alpha\cdot \chi_0)}{\phi(M)q^{\ell}} \right|^2 \\
 &= \frac{1}{q^{\ell}\phi(M)} \sum_{(A \in \mathcal{M} / R_{\ell,M})^{\times}} \left| \frac{\sum_{\chi_0 \neq \chi \in G(R_{\ell,M})} \overline{\chi}(A) S(n,\alpha \cdot \chi)}{\phi(M)q^{\ell}}  \right|^2 \\
 &= \frac{1}{q^{3\ell}\phi^3(M)} \sum_{(A \in \mathcal{M} / R_{\ell,M})^{\times}}\sum_{\chi_1,\chi_2 \in G(R_{\ell,M}) \setminus \{ \chi_0\}} \overline{\chi_1}(A) S(n,\alpha \cdot \chi_1) \chi_2(A) \overline{S(n,\alpha \cdot \chi_2)}.  
\end{split}
\end{equation}
Interchanging the order of summation and applying \eqref{ortho1}, we conclude the proof.
\end{proof}
\section{Proofs of Theorems~\ref{cor:ForFunc}, \ref{cor:ForFuncAP} and \ref{thm:var}}
Here we prove the main results of the paper using Lemma~\ref{lemshort} and Theorem~\ref{thm:expothm}. By \eqref{valuesum} with $M=1$ and $\ell=n-h-1$, we obtain that
\begin{equation}
\left| \langle f \rangle_{I(f_0,h)} - \langle f \rangle_{\mathcal{M}_{n}} \right| \le \frac{\sum_{\chi_0 \neq \chi \in G(R_{n-h-1,1})} \left| S(n,\alpha \cdot \chi)\right| }{q^n} .
\end{equation}
Plugging \eqref{eq:upper2} in the sum above, we conclude the proof of Theorem~\ref{cor:ForFunc}. By \eqref{valuesum} with $\ell=0$, we obtain that
\begin{equation}
\left| \langle f \rangle_{\AP(f_0,M)} - \langle f \rangle_{\mathcal{M}_{n;M}} \right| \le \frac{\sum_{\chi_0 \neq \chi \in G(R_{0,M})} \left| S(n,\alpha \cdot \chi)\right| }{q^{n-\deg(M)}\phi(M)} .
\end{equation}
Plugging \eqref{eq:upper2} in the last sum, we conclude the proof of Theorem~\ref{cor:ForFuncAP}. By \eqref{varform} with $M=1$ and $\ell=n-h-1$, we obtain that
\begin{equation}
\mathrm{Var}(\alpha;n,h)=   \frac{\sum_{\chi_0 \neq \chi \in G(R_{n-h-1,1})} \left| S(n,\alpha \cdot\chi)\right|^2 }{q^{2(n-h-1)}}.
\end{equation}
Plugging \eqref{eq:upper2} in the sum above, \eqref{eq:upper_functionvar} is established, and \eqref{eq:upper_functionvarap} is proved similarly, by applying \eqref{varform} with $\ell=0$. This concludes the proof of Theorem~\ref{thm:var}.
\section{Preparation for proof of Theorem~\ref{thm:expothm}}
Here and in subsequent sections, we shall use the notation $[u^n]f(u)$ for the coefficient of $u^n$ in a power series $f$. We also write $\exp(\bullet)$ for $e^{\bullet}$.
\subsection{Multiplicativity of character sums}
Given $d \ge 1$ and a factorization type
\begin{equation}
\FactType =\{ (d_i, e_i): 1 \le i \le k\}\in \Omega,
\end{equation}
we denote by $\FactType(d) \subseteq \FactType$ the factorization type $\{ (d_i, e_i): 1 \le i \le k,\,  d_i = d \}$. By definition, $\FactType$ is the disjoint of union of the $\FactType(d)$-s. Let $\Ind_{\FactType}$ be the indicator function of polynomials $f$ with $\FactType_f=\FactType$. The following lemma shows that the character sums $S(n,\chi \cdot \Ind_{\FactType})$ (recall \eqref{eq:snalpha}) enjoy a multiplicative property.
\begin{lem}\label{lem:multi}
Let $\FactType \in \Omega$ with $|\FactType|=n$. Let $\chi$ be a Hayes character. Then
\begin{equation}\label{eq:snmult}
S(n,\chi \cdot \Ind_{\FactType}) = \prod_{d=1}^{n} S(|\FactType(d)|,\chi \cdot \Ind_{\FactType(d)}).
\end{equation}
\end{lem}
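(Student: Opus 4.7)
The proof should be a straightforward bookkeeping argument resting on two ingredients: a bijective decomposition that groups the prime factors of $f$ by their degrees, and the complete multiplicativity of Hayes characters.

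The plan is to first observe that any $f\in\mathcal{M}_n$ with $\omega_f = \omega$ can be written uniquely as $f = \prod_{d=1}^n f_d$, where $f_d$ is the product of those prime-power factors $P^e\| f$ with $\deg(P) = d$. By construction $\omega_{f_d} = \omega(d)$, and in particular $\deg(f_d) = |\omega(d)|$, with $f_d = 1$ whenever $\omega(d) = \emptyset$. Conversely, given any tuple $(f_d)_{d=1}^n$ with $f_d \in \mathcal{M}_{|\omega(d)|}$ and $\omega_{f_d} = \omega(d)$, the primes appearing in distinct $f_d$'s automatically have distinct degrees and are therefore distinct in $\mathbb{F}_q[T]$; hence $f := \prod_d f_d$ lies in $\mathcal{M}_n$ (since $\sum_d |\omega(d)| = |\omega| = n$) and has factorization type $\bigsqcup_d \omega(d) = \omega$. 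This yields the bijection
\[ \{f \in \mathcal{M}_n : \omega_f = \omega\} \;\longleftrightarrow\; \prod_{d=1}^n \{f_d \in \mathcal{M}_{|\omega(d)|} : \omega_{f_d} = \omega(d)\}. \]

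Next I would invoke complete multiplicativity of $\chi$: since $\chi(gh) = \chi(g)\chi(h)$ for all $g,h\in\mathcal{M}$, the decomposition above gives $\chi(f) = \prod_d \chi(f_d)$. Substituting this into the definition of $S(n, \chi \cdot 1_\omega)$ and exchanging the product with the sum via the bijection yields
\[ S(n, \chi \cdot 1_\omega) \;=\; \sum_{\substack{f\in\mathcal{M}_n \\ \omega_f = \omega}} \chi(f) \;=\; \prod_{d=1}^n \Biggl( \sum_{\substack{f_d \in \mathcal{M}_{|\omega(d)|} \\ \omega_{f_d} = \omega(d)}} \chi(f_d) \Biggr) \;=\; \prod_{d=1}^n S(|\omega(d)|, \chi \cdot 1_{\omega(d)}), \]
which is the identity \eqref{eq:snmult}. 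There is no substantive obstacle; the only minor point to handle is that degrees $d$ with $\omega(d) = \emptyset$ contribute a trivial factor $S(0, \chi\cdot 1_\emptyset) = \chi(1) = 1$, so they may be dropped from the product without affecting either side.
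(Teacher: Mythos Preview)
Your proposal is correct and follows essentially the same approach as the paper: decompose each $f$ with $\omega_f=\omega$ uniquely as $\prod_d f_d$ according to the degrees of its prime factors, note that $\omega_{f_d}=\omega(d)$, and expand using the complete multiplicativity of $\chi$. Your write-up is in fact more detailed than the paper's one-sentence proof, explicitly verifying both directions of the bijection and disposing of the trivial factors from empty $\omega(d)$.
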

\begin{proof}
Each $f$ with $\FactType_f=\FactType$ can be  written uniquely as $f=\prod_{d=1}^{n} f_d$ where $f_d$ is divisible only by primes of degree $d$.  We then have $\FactType_{f_d}=\FactType(d)$, and the lemma follows by expanding the right-hand side of \eqref{eq:snmult}.
\end{proof}

\subsection{Symmetric function theory}
A partition of size $n$ is a finite (possibly empty) non-increasing sequence of positive integers that sum to $n$. The length of a partition $\lambda=(\lambda_1, \lambda_2, \ldots , \lambda_k)$ is the number of its elements and is denoted by $\ell(\lambda):=k$. We write $\lambda \vdash n$ to indicate that $\lambda$ sums to $n$. The empty partition is of size and length $0$. We denote by $\YY$ the set of all partitions.

An important class of symmetric polynomials is the \emph{monomial symmetric polynomials}. Given a partition $\lambda=(\lambda_1, \lambda_2, \ldots,   \lambda_k)$ and variables $X_1,\ldots, X_k$, the monomial symmetric polynomial $m_{\lambda}(X_1,\ldots,X_k)$ is the symmetric polynomial
\begin{equation}
m_{\lambda}(X_1,\ldots, X_k) := \sum_{\exists \pi \in S_k : (\lambda'_1,\ldots,\lambda'_k) =(\lambda_{\pi(1)},\ldots, \lambda_{\pi(k)})} \prod_{i=1}^{k} X^{\lambda'_i}_{i} \in \ZZ[X_1,\ldots,X_k],
\end{equation}
where the sum is over the \emph{distinct} permutations of $\lambda$. It is useful to extend $m_{\lambda}$ to the case of a general number of variables $X_1,\ldots,X_n$. If $n<k$, we define $m_{\lambda}(X_i: 1 \le i \le n)$ to be zero. If $n>k$, we set $\lambda_j=0$ for $j=k+1,\ldots,n$ and define
\begin{equation}
m_{\lambda}(X_i : 1 \le i \le n) := \sum_{\exists \pi \in S_n : (\lambda'_1,\ldots,\lambda'_n) =(\lambda_{\pi(1)},\ldots,\lambda_{\pi(n)})} \prod_{i=1}^{n} X^{\lambda'_i}_{i}\in \ZZ[X_1,\ldots,X_n],
\end{equation}
where the sum is over the \emph{distinct} permutations of $\lambda$ followed by $n-k$ zeros. In particular, $m_{\lambda}$ is the elementary symmetric polynomial $e_{k}$ if $\lambda=(1,1,\ldots,1)$ with $k$ ones. 

The following lemma expresses character sums, of the form appearing in the right-hand side of \eqref{eq:snmult}, as an evaluation of a monomial symmetric polynomial.
\begin{lem}\label{lem:monomial}
Let $\FactType=\{(d,e_i): 1 \le i \le k\} \in \Omega$. Let $\lambda\in \YY$ be the partition whose parts are $\{ e_i\}_{i=1}^{k}$ in non-increasing order. Let $\chi$ be a Hayes character. We have
\begin{equation}
S(|\FactType|, \chi \cdot \Ind_{\FactType}) = m_{\lambda}(\chi(P): P \in \mathcal{P}_{d}).
\end{equation}
\end{lem}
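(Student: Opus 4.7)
The plan is to unfold both sides of the identity and observe that they enumerate the same sum. The key observation is that when $\omega = \{(d, e_i) : 1 \le i \le k\}$ consists only of pairs with first coordinate $d$, then by unique factorization in $\FF_q[T]$, there is a bijection between monic polynomials $f$ with $\FactType_f = \omega$ and tuples $(a_P)_{P \in \mathcal{P}_d}$ of non-negative integers such that the multiset of \emph{non-zero} entries among the $a_P$ equals $\{e_1,\ldots,e_k\}$ (as a multiset). The bijection sends $(a_P)$ to $f = \prod_{P \in \mathcal{P}_d} P^{a_P}$.

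Given this parametrization, I would use the multiplicativity of the Hayes character $\chi$ (property (1) in the informal definition preceding Theorem \ref{thm:expothm}) to write $\chi(f) = \prod_{P \in \mathcal{P}_d} \chi(P)^{a_P}$. Therefore
\begin{equation}
S(|\omega|, \chi \cdot 1_\omega) = \sum_{\substack{f \in \mathcal{M}_{|\omega|} \\ \FactType_f = \omega}} \chi(f) = \sum_{(a_P)} \prod_{P \in \mathcal{P}_d} \chi(P)^{a_P},
\end{equation}
the last sum ranging over tuples $(a_P)_{P \in \mathcal{P}_d}$ whose non-zero entries form the multiset $\{e_1,\ldots,e_k\}$.

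Finally, I would match the right-hand side with $m_\lambda(\chi(P) : P \in \mathcal{P}_d)$ straight from the definition. Setting $n = |\mathcal{P}_d|$ and padding $\lambda$ with $n-k$ zeros (or treating $m_\lambda$ as $0$ if $k > n$, in which case there are also no such $f$), the monomial symmetric polynomial is by definition the sum over distinct permutations of $(\lambda_1,\ldots,\lambda_k,0,\ldots,0)$ of the corresponding monomial in $\chi(P)$. Distinct permutations of this padded tuple are in bijection with the tuples $(a_P)$ described above: each permutation records which primes receive each exponent, and distinct permutations correspond precisely to distinct assignments (taking into account repeated values among the $e_i$). This completes the identification.

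I do not anticipate a real obstacle here; the proof is essentially a combinatorial translation. The only small subtlety is handling the edge case $k > |\mathcal{P}_d|$, where both sides vanish by the respective conventions, and being careful about the multiset (rather than tuple) nature of $\omega$, which is accounted for by the "distinct permutations" in the definition of $m_\lambda$.
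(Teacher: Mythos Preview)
Your proposal is correct and follows essentially the same approach as the paper: parametrize polynomials $f$ with $\omega_f=\omega$ by exponent assignments $(a_P)_{P\in\mathcal{P}_d}$ whose nonzero values form the multiset $\{e_i\}$, use unique factorization and multiplicativity of $\chi$ to write $\chi(f)=\prod_P \chi(P)^{a_P}$, and then recognize the resulting sum as the definition of $m_\lambda$ in the variables $\chi(P)$. The paper phrases the exponent assignment as a function $e:\mathcal{P}_d\to\mathbb{N}$ with prescribed multiset of values (including the zeros), which is exactly your padded-tuple description; your explicit treatment of the edge case $k>|\mathcal{P}_d|$ is a nice addition.
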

\begin{proof}
A polynomial $f$ with $\FactType_f=\FactType$ is necessarily given by a product $\prod_{i=1}^{k} P_i^{e_i}$ where $P_i$ are distinct elements from $\mathcal{P}_d$. Equivalently, $f$ may be expressed as  $\prod_{P \in \mathcal{P}_d} P^{e(P)}$ where the multiset $\{e(P) : P \in \mathcal{P}_d\}$ is equal to
\begin{equation}
E: = \{ e_i : 1 \le i \le k\} \cup \{ 0 : 1 \le i \le |\mathcal{P}_d|-k\}.
\end{equation}
Moreover, by unique factorization, this form is unique. Thus,
\begin{equation}
\begin{split}
S(|\FactType|, \chi \cdot \Ind_{\FactType}) &= \sum_{f \in \mathcal{M}_{|\FactType|} : \FactType_f = \FactType} \chi(f) = \sum_{\substack{e\colon\mathcal{P}_d \to \mathbb{N}_{\ge 0} \\ \{ e(P) : P \in \mathcal{P}_d\} = E}} \chi\left(\prod_{P \in \mathcal{P}_d} P^{e(P)} \right)\\
&= \sum_{\substack{e\colon\mathcal{P}_d \to \mathbb{N}_{\ge 0} \\ \{ e(P) : P \in \mathcal{P}_d\} = E}} \prod_{P \in \mathcal{P}_d} \big(\chi(P) \big)^{e(P)},
\end{split}
\end{equation}
which is just $m_{\lambda}$ evaluated at $\{\chi(P): P \in \mathcal{P}_d\}$, as needed.
\end{proof}
Another class of symmetric polynomials is the \emph{power sum symmetric polynomials}. Given a positive integer $r$, the power sum symmetric polynomial $p_r(X_i: 1 \le i \le n)$ is the symmetric polynomial
\begin{equation}
p_{r}(X_i: 1 \le i \le n):= \sum_{i=1}^{n} X_i^r \in \ZZ[X_1,\ldots,X_n].
\end{equation}
More generally, given a partition $\lambda=(\lambda_1, \lambda_2, \ldots,  \lambda_k)$, then the power sum symmetric polynomial $p_{\lambda}(X_i: 1 \le i \le n)$ is the symmetric polynomial
\begin{equation}
p_{\lambda}(X_i: 1 \le i \le n) := \prod_{i=1}^{k} p_{\lambda_i}(X_i: 1 \le i \le n).
\end{equation}
A basic result in symmetric function theory says that whenever $m \ge n$, $\{ m_{\lambda}(X_i: 1 \le i \le m) \}_{\lambda \vdash n}$ and $\{ p_{\lambda}(X_i: 1 \le i \le m) \}_{\lambda \vdash n}$ are both bases for homogeneous symmetric polynomials of degree $n$ with rational coefficients. In particular, $m_{\lambda}(X_i: 1 \le i \le m)$ can be expressed uniquely as a linear combination of the symmetric polynomials $p_{\mu}$ for partitions $\mu$ of size $n$, that is, there are unique coefficients $c_{\lambda,\mu}\in \mathbb{Q}$ such that
\begin{equation}\label{eq:basechange}
m_{\lambda}(X_i: 1 \le i \le m) = \sum_{\mu \vdash n} c_{\lambda,\mu} p_{\mu}(X_i: 1 \le i \le m)
\end{equation}
for all $m \ge n$ ($c_{\lambda,\mu}$ are independent of $m$). E\u{g}ecio\u{g}lu and Remmel \cite[pp.~107--111]{ege1991} gave a combinatorial interpretation of $c_{\lambda,\mu}$ which we now describe. We begin with their definition of $\lambda$-brick tabloids.

Let $\lambda=(\lambda_1, \lambda_2, \ldots,  \lambda_k)$ and $\mu=(\mu_1, \mu_2, \ldots, \mu_r)$ be two partitions. Recall that the \emph{Young diagram} $Y_{\mu}$ is the diagram which consists of left justified rows of squares of lengths $\mu_1,\mu_2,\ldots,\mu_k$ reading from top to bottom. For instance, if $\mu=(4, 3)$ then $Y_{\mu}$ is given by
\begin{equation}
Y_{\mu}=  \ydiagram{4,3}\, .
\end{equation}
 A \emph{$\lambda$-brick tabloid $T$ of shape $\mu$} is a filling of $Y_{\mu}$ with bricks $b_1,\ldots,b_k$ of lengths $\lambda_1,\ldots, \lambda_k$, respectively, such that 
\begin{enumerate}
\item each brick $b_i$ covers exactly $\lambda_i$ squares of $Y_{\mu}$ all of which lie in a single row of $Y_{\mu}$,
\item no two brick overlap.
\end{enumerate}
For example, if $\lambda=(3, 2, 1, 1)$ and $\mu=(4, 3)$, then we must cover $Y_{\mu}$ with the bricks 
\begin{equation}
\substack{\ydiagram{3} \vspace{2pt} \\  b_1}\,, \quad \substack{\ydiagram{2} \vspace{2pt}\\ b_2}\,, \quad \substack{ \ydiagram{1} \vspace{2pt}\\ b_3}\,, \quad  \substack{\ydiagram{1} \vspace{2pt}\\b_4} \, .
\end{equation}
Here, bricks of the same size are indistinguishable. There are in total seven $\lambda$-brick tabloids of shape $\mu$, given in Figure~\ref{fig:tabs}.

\begin{figure}
	\centering
\begin{tikzpicture}[scale=0.50]
\draw[black] (0,0)rectangle (1,1);
\draw[black] (1,0)rectangle (2,1);
\draw[black] (2,0)rectangle (4,1);
\draw[black] (0,-1)rectangle (3,0);

\draw[black] (5,0)rectangle (6,1);
\draw[black] (6,0)rectangle (8,1);
\draw[black] (8,0)rectangle (9,1);
\draw[black] (5,-1)rectangle (8,0);

\draw[black] (10,0)rectangle (12,1);
\draw[black] (12,0)rectangle (13,1);
\draw[black] (13,0)rectangle (14,1);
\draw[black] (10,-1)rectangle (13,0);

\draw[black] (15,0)rectangle (18,1);
\draw[black] (18,0)rectangle (19,1);
\draw[black] (15,-1)rectangle (16,0);
\draw[black] (16,-1)rectangle (18,0);

\node[below] at (2,-1.2) {$T_1$};
\node[below] at (7,-1.2) {$T_2$};
\node[below] at (12,-1.2) {$T_3$};
\node[below] at (17,-1.2) {$T_4$};

\draw[black] (2,-3.2)rectangle (3,-2.2);
\draw[black] (3,-3.2)rectangle (6,-2.2);
\draw[black] (2,-4.2)rectangle (3,-3.2);
\draw[black] (3,-4.2)rectangle (5,-3.2);

\draw[black] (7,-3.2)rectangle (10,-2.2);
\draw[black] (10,-3.2)rectangle (11,-2.2);
\draw[black] (7,-4.2)rectangle (9,-3.2);
\draw[black] (9,-4.2)rectangle (10,-3.2);

\draw[black] (12,-3.2)rectangle (13,-2.2);
\draw[black] (13,-3.2)rectangle (16,-2.2);
\draw[black] (12,-4.2)rectangle (14,-3.2);
\draw[black] (14,-4.2)rectangle (15,-3.2);

\node[below] at (4,-4.4) {$T_5$};
\node[below] at (9,-4.4) {$T_6$};
\node[below] at (14,-4.4) {$T_7$};
\end{tikzpicture}
\caption{$(3,2,1,1)$-brick tabloids of shape $(4,3)$} \label{fig:tabs}

\end{figure}
We let $B_{\lambda, \mu}$ denote the set of $\lambda$-brick tabloids of shape $\mu$. We define a weight $w(T)$ for each $\lambda$-brick tabloid $T\in B_{\lambda,\mu}$ by
\begin{equation}
w(T)=\prod_{b \in T}w_T(b),
\end{equation}
where for each brick $b$ in $T$, $|b|$ denotes the length of $b$ and 
\begin{equation}
w_T(b)=\begin{cases} 1 & \mbox{if $b$ is not at the end of a row in $T$,} \\ |b| &\mbox{if $b$ is at the end of a row in $T$.} \end{cases}
\end{equation}
Thus $w(T)$ is the product of the lengths of the rightmost bricks in $T$. For example,
for the seven $(3, 2, 1, 1)$-brick tabloids of shape $(4, 3)$ given in Figure \ref{fig:tabs}, the weights are computed to be $w(T_1)= 6$, $w(T_2)= 3$, $w(T_3)= 3$, $w(T_4)= 2$, $w(T_5)= 6$, $w(T_6)=1$ and $w(T_7)= 3$. We let
\begin{equation}
w(B_{\lambda,\mu}):=\sum_{T \in B_{\lambda,\mu}} w(T).
\end{equation}
E\u{g}ecio\u{g}lu and Remmel \cite[Rel.~(11),~p.~111]{ege1991} proved that for partitions $\lambda, \mu \vdash n$, we have
\begin{equation}\label{eq:cdef}
c_{\lambda,\mu} = (-1)^{\ell(\lambda)-\ell(\mu)}w(B_{\lambda,\mu}) \Prob_{\mu},
\end{equation}
where
\begin{equation}
\Prob_{\mu}:= \Prob_{\pi \in S_n}(\pi \text{ has cycle type }\mu).
\end{equation}
Here $\Prob_{\pi \in S_n}$ is the uniform probability measure on the symmetric group $S_n$, and we say that $\pi$ has a cycle type $(\mu_1, \mu_2, \ldots, \mu_r)$ if the cycle sizes of $\pi$ are given by $\mu_1, \ldots, \mu_r$.

\begin{lem}\label{lem:bsum}
Let $n$ and $k$ be positive integers. Let $\mu \vdash n$. We have
\begin{equation}\label{eq:sumomegas}
\sum_{\substack{\lambda \vdash n \\ \ell(\lambda) \le k}} w(B_{\lambda,\mu}) \le \sum_{i=0}^{k}\binom{n}{i}.
\end{equation}
\end{lem}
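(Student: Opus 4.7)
The plan is to compute a univariate generating function for $w(B_{\lambda,\mu})$ and then derive the bound by a coefficient-wise comparison with $(1+z)^n$. Writing $\mu = (\mu_1, \ldots, \mu_r)$, every brick tabloid of shape $\mu$ arises uniquely from a choice of composition $(c^{(j)}_1, \ldots, c^{(j)}_{k_j})$ of $\mu_j$ for each row $j$; the associated partition $\lambda \vdash n$ is obtained by sorting the multiset $\{c^{(j)}_i\}_{j,i}$, so $\ell(\lambda) = \sum_{j=1}^r k_j$, and the weight factorizes as $w(T) = \prod_{j=1}^r c^{(j)}_{k_j}$. Setting
\begin{equation*}
h_m(z) := \sum_{\ell \ge 1} z^\ell \sum_{\substack{c_1+\cdots+c_\ell = m \\ c_i \ge 1}} c_\ell, \qquad f_\mu(z) := \sum_{\lambda \vdash n} w(B_{\lambda,\mu}) z^{\ell(\lambda)},
\end{equation*}
this row-by-row description yields the factorization $f_\mu(z) = \prod_{j=1}^{r} h_{\mu_j}(z)$, and the left-hand side of \eqref{eq:sumomegas} is then precisely $\sum_{i=0}^{k} [z^i] f_\mu(z)$.

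The key identity, and the only step that requires a genuine combinatorial idea, is $h_m(z) = (1+z)^m - 1$. I would prove it bijectively: a pair consisting of a composition $(c_1,\ldots,c_\ell)$ of $m$ together with a marked square $s \in \{1,\ldots, c_\ell\}$ inside its last part corresponds to the size-$\ell$ subset
\begin{equation*}
\{c_1, \, c_1+c_2, \, \ldots, \, c_1+\cdots+c_{\ell-1}, \, c_1+\cdots+c_{\ell-1}+s\} \subseteq \{1,\ldots,m\};
\end{equation*}
the inverse recovers $c_i = a_i - a_{i-1}$ for $1 \le i < \ell$ (with $a_0 := 0$), $c_\ell = m - a_{\ell-1}$, and $s = a_\ell - a_{\ell-1}$, and one checks $1 \le s \le c_\ell$. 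Hence $[z^\ell] h_m(z) = \binom{m}{\ell}$ for every $\ell \ge 1$, so $h_m(z) = (1+z)^m - 1$.

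To finish, I would write each factor $(1+z)^{\mu_j}$ as $1 + ((1+z)^{\mu_j}-1)$ and expand to get
\begin{equation*}
(1+z)^n = \prod_{j=1}^{r} (1+z)^{\mu_j} = \sum_{S \subseteq \{1, \ldots, r\}} \prod_{j \in S} \left((1+z)^{\mu_j} - 1\right).
\end{equation*}
Since every $(1+z)^{\mu_j}-1$ has non-negative coefficients, so does each summand on the right; in particular the summand with $S = \{1,\ldots,r\}$, which is exactly $f_\mu(z)$, is coefficient-wise dominated by $(1+z)^n$. Summing the coefficients of $z^0, z^1, \ldots, z^k$ on both sides gives $\sum_{i=0}^{k}[z^i]f_\mu(z) \le \sum_{i=0}^{k}\binom{n}{i}$, which is \eqref{eq:sumomegas}. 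The main obstacle is establishing $h_m(z) = (1+z)^m - 1$; once that is in hand, everything else is a one-line positivity argument.
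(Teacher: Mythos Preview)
Your proof is correct and follows essentially the same route as the paper: both define the same generating function $f_\mu(z)=\sum_{\lambda\vdash n} w(B_{\lambda,\mu})z^{\ell(\lambda)}$, factor it row-by-row as $\prod_j h_{\mu_j}(z)$, establish $h_m(z)=(1+z)^m-1$, and conclude by coefficient-wise comparison with $(1+z)^n$. The only difference is in the proof of the identity $h_m(z)=(1+z)^m-1$: the paper evaluates the inner sum via the composition count $\binom{m-y-1}{b-2}$ and the rational-function identity $\sum_{i=1}^{d}ix^i=x(dx^{d+1}-(d+1)x^d+1)/(x-1)^2$, whereas you give a direct bijection between marked compositions and subsets of $\{1,\ldots,m\}$, which is cleaner and avoids the algebraic manipulation.
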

\begin{proof}
Write $\mu$ as $(\mu_1,\ldots,\mu_r)$. A $\lambda$-brick tabloid of shape $\mu$ determines the partition $\lambda$ uniquely. Indeed, $\lambda$ can be recovered by reading the lengths of the bricks in each row of the tabloid. Thus, the set $\cup_{\lambda\vdash n, \, \ell(\lambda) \le k}B_{\lambda,\mu}$ may be identified with a sequence $\{ b_i\}_{i=1}^{r}$ of positive integers with $\sum_{i=1}^{r} b_i \le k$, and a double sequence $\{ a_{i,j} \}_{ 1 \le i \le r,\,1 \le j \le b_i}$ of positive integers with $\sum_{j=1}^{b_i} a_{i,j}=\mu_i$ for each $i$ as follows. The number $b_i$ is set to be the number of blocks in the $i$th topmost row of the tabloid, and the number $a_{i,j}$ is set to be the length of the $j$th leftmost brick in the $i$-topmost row. Under this identification, $w(B_{\lambda,\mu})$ is given by the product $\prod_{i=1}^{r}a_{i,b_i}$, and it follows that for any $t \ge 0$, we have
\begin{equation}\label{eq:paramblambda}
\sum_{\substack{\lambda \vdash n \\ \ell(\lambda) = t}} w(B_{\lambda,\mu}) = \sum_{\substack{b_1,\ldots,b_{r} \ge 1\\ b_1+\ldots + b_r = t}} \sum_{\substack{\forall 1 \le i \le r:\\ a_{i,1},\ldots, a_{i,b_i} \ge 1 \\ \sum_{j=1}^{b_i} a_{i,j}=\mu_i}} \prod_{i=1}^{r} a_{i,b_i}.
\end{equation}
Consider the generating function
\begin{equation}
B(u) := \sum_{\lambda \vdash n} \omega(B_{\lambda,\mu}) u^{\ell(\lambda)}.
\end{equation}
Letting $c(n_1,n_2, n_3)$ be the number of solutions to $x_1+x_2+\ldots + x_{n_1}= n_3$ with $x_{n_1}=n_2$ and $x_i \ge 1$, it follows from \eqref{eq:paramblambda} that 
\begin{equation}
B(u)=\sum_{\substack{\forall 1 \le i \le r:\\b_i, y_i \ge 1}} \prod_{i=1}^{r} c(b_i,y_i,\mu_i) y_i u^{b_i} = \prod_{i=1}^{r} (\sum_{b_i,y_i \ge 1} c(b_i,y_i,\mu_i)y_i u^{b_i}).
\end{equation} 
As $c(n_1,n_2,n_3)$ is also the number of solutions to $x_1+\ldots + x_{n_1-1}=n_3-n_2$ in positive integers, a standard combinatorial result says that
\begin{equation}
c(n_1,n_2,n_3) = \begin{cases} \binom{n_3-n_2-1}{n_1-2} & \mbox{if $n_3 \ge n_2+n_1-1$, $n_1 \ge 2$,}\\ \Ind_{n_2=n_3} & \mbox{if $n_1=1$,} \\ 0 & \mbox{otherwise,} \end{cases}
\end{equation}
so that
\begin{equation}
\begin{split}
B(u) &= \prod_{i=1}^{r} (\mu_i u+ \sum_{y=1}^{\mu_i-1} \sum_{b=2}^{\mu_i-y+1} \binom{\mu_i-y-1}{b-2} yu^{b})\\
&= \prod_{i=1}^{r} (\mu_i u+ \sum_{y=1}^{\mu_i-1} yu^2 (1+u)^{\mu_i-y-1})\\
&=\prod_{i=1}^{r} ((1+u)^{\mu_i}-1),
\end{split}
\end{equation} 
where in the last passage we made use of the identity $\sum_{i=1}^{d}ix^i =x(dx^{d+1}-(d+1)x^d+1)/(x-1)^2$ with $d=\mu_i-1$ and $x=1/(1+u)$. As the left-hand side of \eqref{eq:sumomegas} is the sum of the first $k+1$ coefficients of $B(u)$, and they are bounded from above by the corresponding coefficients of $\prod_{i=1}^{r}(1+u)^{\mu_i} = (1+u)^n = \sum_{i=0}^{n} \binom{n}{i}u^i$, the proof is concluded.
\end{proof}

\subsection{Permutation statistics}
We denote the expectation of a function $f\colon S_n\to \mathbb{R}$ with respect to the uniform probability measure on $S_n$ by $\mathrm{E}_{\pi \in S_n}f(\pi)$. We denote by $\ell(\pi)$ the number of cycles in a permutation $\pi$. 
\begin{lem}\label{lem:meanz}
Let $n \ge 1$, $m \ge 2$ be positive integers. Let $z_1,z_2 \in \mathbb{C}$. Define the following function on $S_n$:
\begin{equation}\label{eq:deff}
f(\pi) = \prod_{C \in \pi,\, m \nmid \left| C \right|} z_1 \prod_{C \in \pi, \, m \mid \left| C \right|} z_2
\end{equation}
where the product is over the disjoint cycles of $\pi$. Then
\begin{equation}
\mathrm{E}_{\pi \in S_n} f(\pi)= [u^n](1-u)^{-z_1}(1-u^m)^{(-z_2+z_1)/m}.
\end{equation}
\end{lem}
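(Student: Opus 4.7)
The plan is to recognize $f(\pi)$ as a multiplicative statistic over disjoint cycles and apply the cycle index (exponential) formula for symmetric groups, which converts such a sum into a coefficient extraction from an explicit exponential generating function.

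Concretely, I would start from the following well-known identity: for any function $g:\mathbb{N}_{>0}\to\mathbb{C}$,
\begin{equation}
\sum_{n\ge 0}\frac{u^n}{n!}\sum_{\pi\in S_n}\prod_{C\in\pi} g(|C|) \;=\; \exp\!\left(\sum_{k\ge 1}\frac{g(k)}{k}u^k\right),
\end{equation}
which follows from the standard fact that $|\{\pi\in S_n:\pi\text{ has cycle type }\mu\}| = n!/\prod_i (\mu_i\cdot m_i(\mu)!)$ (where $m_i(\mu)$ is the multiplicity of $i$ in $\mu$) together with the multinomial expansion of $\exp$. Dividing by $u^n/n!$ and isolating the coefficient yields
\begin{equation}
\mathrm{E}_{\pi\in S_n} \prod_{C\in\pi} g(|C|) = [u^n]\exp\!\left(\sum_{k\ge 1}\frac{g(k)}{k}u^k\right).
\end{equation}

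Next I would specialize to $g(k)=z_1$ when $m\nmid k$ and $g(k)=z_2$ when $m\mid k$, so that $f(\pi)=\prod_{C\in\pi}g(|C|)$. The exponent splits as
\begin{equation}
\sum_{k\ge 1}\frac{g(k)}{k}u^k = z_1\sum_{\substack{k\ge 1\\ m\nmid k}}\frac{u^k}{k} + z_2\sum_{\substack{k\ge 1\\ m\mid k}}\frac{u^k}{k}.
\end{equation}
Using $\sum_{k\ge 1}u^k/k = -\log(1-u)$ and $\sum_{k\ge 1}u^{mk}/(mk) = -\tfrac{1}{m}\log(1-u^m)$, the sum over $m\nmid k$ equals $-\log(1-u)+\tfrac{1}{m}\log(1-u^m)$. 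Combining,
\begin{equation}
\sum_{k\ge 1}\frac{g(k)}{k}u^k = -z_1\log(1-u) + \frac{z_1-z_2}{m}\log(1-u^m).
\end{equation}
Exponentiating gives
\begin{equation}
\exp\!\left(\sum_{k\ge 1}\frac{g(k)}{k}u^k\right) = (1-u)^{-z_1}(1-u^m)^{(-z_2+z_1)/m},
\end{equation}
and extracting $[u^n]$ yields the claim.

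No step looks particularly delicate: the only mild care needed is to interpret the fractional powers $(1-u)^{-z_1}$ and $(1-u^m)^{(z_1-z_2)/m}$ formally via their binomial series in $\mathbb{C}[[u]]$, which is unambiguous since both series have constant term $1$, so the exponentiation/logarithm identities used above are valid as identities of formal power series. If desired, one can bypass formal-series subtleties altogether by noting that both sides are polynomials in $(z_1,z_2)$ of degree $n$, and checking equality on integer values of $z_1,z_2$, where the exponential formula admits a direct combinatorial proof via the $\exp$–$\log$ species correspondence.
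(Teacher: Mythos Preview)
Your proof is correct and follows essentially the same approach as the paper: both apply the exponential formula for permutations with the weight $g(k)=z_1$ for $m\nmid k$ and $g(k)=z_2$ for $m\mid k$, then simplify the resulting exponent via the logarithmic series for $(1-u)^{-1}$ and $(1-u^m)^{-1}$. Your additional remarks on interpreting the fractional powers as formal binomial series and the polynomial-identity alternative are not in the paper but are harmless elaborations.
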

\begin{proof}
The exponential formula for permutations \cite[Corollary~5.1.9]{stanley1999enumerative} states the following. Given a function $g\colon \mathbb{N}\to \mathbb{C}$, we construct a corresponding function on permutations (on arbitrary number of elements) as follows:
\begin{equation}
G(\pi) = \prod_{C \in \pi} g(|C|),
\end{equation}
where the product is over the disjoint cycles of $\pi$. We then have the following identity of formal power series:
\begin{equation}
1+\sum_{i \ge 1} \left(\mathrm{E}_{\pi \in S_i}  G(\pi)\right) u^i = \exp(\sum_{j \ge 1} \frac{g(j)}{j}u^j).
\end{equation}
Applying the identity with
\begin{equation}
g(j) = \begin{cases} z_1 & \mbox{if $m \nmid j$,} \\ z_2 & \mbox{otherwise,} \end{cases}
\end{equation}
we find that $G(\pi)=f(\pi)$ for every $\pi \in S_n$, and the lemma follows by a short computation.
\end{proof}
\subsection{Bounds on certain finite sums}
\begin{lem}\label{lem:bounds}
Let $x \ge 2$ be a real number and $n$ be a positive integer. Then
\begin{enumerate}
\item $\sum_{d_1 d_2 = n} 2^{d_1} x^{d_2} \le 8 x^n$.
\item If furthermore $x \ge 4$, $\sum_{d_1 d_2 = n,\, d_2<n} 2^{d_1} x^{d_2} \le 10 x^{n/2}$.
\end{enumerate}
\end{lem}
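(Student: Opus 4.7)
Part (2) is immediate from part (1): applying part (1) with $n$ replaced by $n/m$ (which is a positive integer since $m \mid n$ and $2 \le m \le n$) yields
\[
\sum_{d_1 d_2 = n/m} 2^{d_1} x^{d_2} \le 8 x^{n/m},
\]
and since $m \ge 2$ and $x \ge 2 \ge 1$, we have $x^{n/m} \le x^{n/2}$. So the content of the lemma lies in part (1).

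For part (1), I would split the sum $\sum_{d_1 d_2 = n} 2^{d_1} x^{d_2} = \sum_{d_1 \mid n} 2^{d_1} x^{n/d_1}$ into three contributions according to the value of $d_1$: the endpoint $d_1 = 1$, the endpoint $d_1 = n$, and the middle range $2 \le d_1 \le n/2$ (which is empty for $n \in \{1,2,3\}$, only helping the bound). My aim is to bound each piece by $2 x^n$, so that the total is at most $6 x^n \le 8 x^n$.

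The endpoint $d_1 = 1$ contributes $2 x^n$ directly. For the endpoint $d_1 = n$, the contribution is $2^n x$; using $x^{n-1} \ge 2^{n-1}$ (which holds for $x \ge 2$ and $n \ge 1$, with $2^0 = 1$), we get $2^n x = x \cdot 2^n \le x \cdot 2 \cdot x^{n-1} = 2 x^n$. For the middle range, the key observation is that $2 \le d_1 \le n/2$ forces $x^{n/d_1} \le x^{n/2}$ throughout the sum; factoring this out leaves
\[
x^{n/2} \sum_{\substack{d_1 \mid n \\ 2 \le d_1 \le n/2}} 2^{d_1} \;\le\; x^{n/2} \sum_{d_1 = 2}^{\lfloor n/2 \rfloor} 2^{d_1} \;\le\; x^{n/2} \cdot 2^{n/2 + 1},
\]
and a final application of $2^{n/2} \le x^{n/2}$ brings this piece down to $2 x^n$, as desired.

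No serious obstacle is expected: the sum is tightly concentrated on the two extremal divisors, with geometric decay from each end handling the middle terms, and the loss of a constant factor in using $x^{n/m} \le x^{n/2}$ costs nothing since we are aiming for the clean bound $8 x^{n/2}$.
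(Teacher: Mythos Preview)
Your proof is correct. The deduction of part (2) from part (1) is the same as in the paper, but your argument for part (1) is genuinely different from, and more elementary than, the paper's. The paper parametrizes the sum as $\sum_{d_2\mid n} f(d_2)$ with $f(t)=2^{n/t}x^{t}$, uses calculus to show $f$ is unimodal on $[1,n]$, argues that $f(1)\le f(n)$ and $f(p)\le f(n/p)$ for prime $p\mid n$, and then bounds the middle divisors by $\frac{n-2}{2}\max_{d_2\mid n,\,d_2\neq 1,n} f(d_2)\le \frac{n-2}{2}f(n/p)$, finishing with an explicit inequality $(n-2)2^{p-1}\le 4x^{n-n/p}$. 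Your three-part split ($d_1=1$, $d_1=n$, $2\le d_1\le n/2$) together with the geometric-series bound $\sum_{d_1=2}^{\lfloor n/2\rfloor}2^{d_1}\le 2^{n/2+1}$ avoids all of this and in fact yields the slightly sharper constant $6$ in place of $8$. The paper's approach has the advantage of making visible where the maximum of the summand sits, which is what motivates the closely related Lemma~\ref{lem:bounds2}; your approach has the advantage of being shorter and requiring no case distinction on whether $n$ is prime.
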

\begin{proof}
We begin with the first part. We may assume $n>1$. Consider the function $f(t)=2^{\frac{n}{t}}x^{t}$ on $[1,n]$. Its derivative is
\begin{equation}
f'(t) = f(t) (\log x - \frac{n \log 2}{t^2}),
\end{equation}
so that $f$ is either increasing from $1$ to $n$ (if $n\log 2 / \log x \le 1$), or decreasing from $1$ to $\sqrt{n \log 2/\log x}$ and increasing from $\sqrt{n \log 2/\log x}$ to $n$ (otherwise). As $f(1)=2^{n}x \le 2x^{n} = f(n)$, it follows that
\begin{equation}
\sum_{d_1 d_2 = n} 2^{d_1} x^{d_2} =\sum_{d_2 \mid n} f(d_2) \le 4x^n + (n-2) \max\{ f(2),f(n/2) \} \cdot \Ind_{n \text{ not a prime}}.
\end{equation}
If $n$ is a prime, we are done. Otherwise, it suffices to show that $(n-2)2^2 x^{n/2} \le 4x^n$ and that $(n-2)2^{n/2}x^2 \le 4x^n$. For $n =4,5$, these are easy to verify, and for larger $n$ they follow by induction. This establishes the first part of the lemma. The proof of the second part follows similar lines and is therefore omitted.
\end{proof}
The following variant of Lemma~\ref{lem:bounds} is also needed. We omit the similar proof.
\begin{lem}\label{lem:bounds2}
Let $x \in \{ \sqrt{2}, \sqrt{3}, 2, 3\}$. For any positive integer $n$, we have
\begin{equation}
\sum_{d_1 d_2 = n,\, d_1 \neq n} 2^{d_1} x^{d_2} \le 7 x^{n}.
\end{equation}
For $x=2,3$ we also have $\sum_{d_1 d_2 = n, \, 1<d_2<n} 2^{d_1} x^{d_2} + 1.4^n x \le 14 x^{n/2}$.
\end{lem}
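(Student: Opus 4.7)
The proof will closely follow Lemma \ref{lem:bounds}. Set $f(t) = 2^{n/t} x^t$ on $[1,n]$; the same second-derivative computation as in Lemma \ref{lem:bounds} shows $\log f$ is convex in $t$, so $f$ decreases on $[1, t_0]$ and increases on $[t_0, n]$, where $t_0 = \sqrt{n \log 2/\log x}$. The case $n = 1$ has an empty sum, and if $n$ is prime the only divisor $d \ge 2$ of $n$ is $n$ itself, contributing $f(n) = 2 x^n \le 7 x^n$.

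For composite $n \ge 4$ with smallest prime divisor $p$, I plan to write
\[
\sum_{d \mid n,\, d \ge 2} f(d) = f(n) + \sum_{d \mid n,\, 2 \le d \le n/p} f(d),
\]
and bound the middle sum by $\tfrac{n-2}{2} \max(f(p), f(n/p))$, using convexity of $f$ on $[p, n/p]$ together with the divisor bound $d(n) - 2 \le (n-2)/2$ valid for composite $n$. Since $f(p)/f(n/p) = (2/x)^{n/p - p}$, the maximum equals $f(n/p)$ when $x \ge 2$ and $f(p)$ when $x < 2$. For the easy subcases $x \in \{2, 3\}$ (any admissible $p$) or $x \in \{\sqrt{2}, \sqrt{3}\}$ with $p \ge 3$, the resulting estimate $\tfrac{n-2}{2}\max(f(p), f(n/p)) \le 5 x^n$ follows from the same polynomial-vs-exponential inequalities used to close Lemma \ref{lem:bounds}, except with the target constant $5$ in place of $2$ (since here only one copy of $f(n)$, rather than two, is available from the boundary). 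Adding $f(n) = 2 x^n$ then produces the required $7 x^n$.

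The main obstacle is the remaining subcase $x \in \{\sqrt{2}, \sqrt{3}\}$ with $p = 2$ (even composite $n$), where $f(2) = 2^{n/2} x^2$ is comparable to $f(n)$; indeed, for $x = \sqrt{2}$ one has $f(2) = f(n) = 2 x^n$ exactly, so the previous approach already fails once $n \ge 8$. I will resolve this by pulling $d = 2$ out of the middle sum and decomposing
\[
\sum_{d \mid n,\, d \ge 2} f(d) = f(2) + f(n) + \sum_{d \mid n,\, 3 \le d \le n/2} f(d),
\]
observing that $f(2) + f(n) \le 4 x^n$ for both relevant values of $x$ (with equality at $x = \sqrt{2}$). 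For the residual sum, convexity of $f$ on $[3, n/2]$ together with the inequality $f(3) \ge f(n/2)$ (checked by comparing exponents for $n \ge 6$) identifies $f(3) = 2^{n/3} x^3$ as the convex maximum, giving a bound of $(d(n) - 3) f(3)$. The desired $(d(n) - 3) f(3) \le 3 x^n$ then reduces---worst case $x = \sqrt{2}$---to the elementary inequality $d(n) - 3 \le 3 \cdot 2^{n/6 - 3/2}$, which I verify using the standard bound $d(n) \le 2\sqrt{n}$ together with direct computation for small $n$ (the inequality is tightest around $n = 8$). This completes the proof.
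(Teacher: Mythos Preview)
The paper does not actually prove this lemma: it writes ``We omit the similar proof,'' referring back to Lemma~\ref{lem:bounds}. Your plan of adapting that argument is exactly what is intended, and the bulk of it (the convexity set-up, the case split on $p$, pulling out $d=2$ when $x<2$) is sound.

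There is, however, a genuine error in the final subcase $x\in\{\sqrt 2,\sqrt 3\}$, $p=2$. You assert that $f(3)\ge f(n/2)$ for $n\ge 6$ and both values of $x$. This is correct for $x=\sqrt 2$, where $f(3)/f(n/2)=2^{n/12-1/2}$, but it \emph{fails} for $x=\sqrt 3$: there
\[
\log\!\bigl(f(3)/f(n/2)\bigr)=n\Bigl(\tfrac{\log 2}{3}-\tfrac{\log 3}{4}\Bigr)+\Bigl(\tfrac32\log 3-2\log 2\Bigr),
\]
and the coefficient of $n$ is negative, so the inequality reverses for even $n\ge 8$. Concretely, for $n=12$ and $x=\sqrt 3$ one has $f(3)=48\sqrt 3\approx 83.1$, $f(4)=72$, $f(6)=108$, and your proposed bound $(d(12)-3)f(3)=3f(3)\approx 249$ does not majorize the residual sum $f(3)+f(4)+f(6)\approx 263$. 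So the step ``identifies $f(3)$ as the convex maximum'' is false for $x=\sqrt 3$, and the subsequent reduction to ``worst case $x=\sqrt 2$'' is unjustified.

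The repair is immediate: for $x=\sqrt 3$ use $f(n/2)=4x^{n/2}$ as the convex maximum on $[3,n/2]$ instead, and check
\[
(d(n)-3)\cdot 4\cdot 3^{n/4}\le 3\cdot 3^{n/2},\qquad\text{i.e.}\qquad d(n)-3\le \tfrac34\,3^{n/4},
\]
which follows from $d(n)\le 2\sqrt n$ for $n\ge 8$ and is vacuous or trivial for $n\in\{4,6\}$. With this extra branch on the value of $x$, your argument goes through.
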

\begin{lem}\label{lem:binomtoexp}
For any $n \ge 1$, we have $\sum_{i=0}^{3} \binom{n}{i} \le 7 \cdot 1.4^n$.
\end{lem}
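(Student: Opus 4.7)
The plan is to prove the inequality by a short induction on $n$. Setting $f(n) := \sum_{i=0}^{3}\binom{n}{i}$, I would first apply Pascal's identity $\binom{n+1}{i}=\binom{n}{i}+\binom{n}{i-1}$ termwise to derive the recursion
\[
f(n+1) \;=\; 2f(n) - \binom{n}{3}.
\]
Consequently, the multiplicative step $f(n+1)\le 1.4\,f(n)$ that I want for induction is equivalent to the inequality $\binom{n}{3}\ge 0.6\,f(n)$, i.e.\ to
\[
0.4\binom{n}{3} \;\ge\; 0.6\!\left(1+n+\binom{n}{2}\right).
\]
Since the left-hand side is cubic in $n$ while the right-hand side is quadratic, this inequality holds for all sufficiently large $n$. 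A direct check confirms that it is already valid at $n=8$ (where both sides are close to $22.3$), and the slack grows monotonically thereafter, so the step $f(n+1)\le 1.4\,f(n)$ is available for every $n\ge 8$.

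With the induction step in hand, I would complete the proof by verifying the claimed bound $f(n)\le 7\cdot 1.4^n$ directly for the finitely many small values $n=1,2,\ldots,8$; the worst such case is $n=8$, where $f(8)=93$ and $7\cdot 1.4^8\approx 103.3$, leaving comfortable room. The multiplicative step then propagates the inequality to all $n\ge 9$ via $f(n+1)\le 1.4\,f(n)\le 7\cdot 1.4^{n+1}$. There is no real obstacle here: the only care required is in identifying the precise threshold from which the recursive step begins to hold, which is dispatched by the explicit calculation above.
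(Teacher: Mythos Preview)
Your argument is correct: the Pascal-based recursion $f(n+1)=2f(n)-\binom{n}{3}$ is valid, the multiplicative step $f(n+1)\le 1.4\,f(n)$ indeed reduces to $0.4\binom{n}{3}\ge 0.6\bigl(1+n+\binom{n}{2}\bigr)$, and your threshold $n=8$ and base-case checks are accurate.

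The paper's proof follows the same overall shape---a finite check for small $n$ followed by an induction for large $n$---but executes the inductive part differently. Rather than exploiting the exact recursion for $f(n)$, the paper simply bounds $\sum_{i=0}^{3}\binom{n}{i}=\tfrac{n^3+5n+6}{6}\le 2n^3$ for $n\ge 10$ and then proves $2n^3\le 7\cdot 1.4^n$ by a separate (routine) induction. Your route is a bit sharper, since it works directly with $f$ and kicks in already at $n=8$; the paper's route trades precision for simplicity by passing through the crude polynomial majorant $2n^3$. Both are equally elementary and neither offers a structural advantage for the rest of the paper.
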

\begin{proof}
For $n \le 9$, this is checked by a short computation. For $n \ge 10$, we have $\sum_{i=0}^{3} \binom{n}{i} = (n^3+5n+6)/6 \le 2n^3$, and an inductive argument shows that $2n^3 \le 7 \cdot 1.4^n$. 
\end{proof}

\subsection{Bounds on coefficients of a generating function}
\begin{lem}\label{lem:genfunc}
Let $t, r \ge 2$. Set $L:=\lfloor 2 \log_t r \rfloor$ and
\begin{equation}
Z(u):=\exp\Big(\sum_{1 \le k \le L} \frac{20t^k}{k} u^k + \sum_{k > L} 20(r+1) \frac{t^{\frac{k}{2}}}{k} u^k \Big).
\end{equation}
Then
\begin{equation}\label{eq:binombound}
\left| [u^n] Z(u) \right| \le t^{\frac{n}{2}} \binom{20(r+1)+n-1}{n}
\end{equation}
for all $n \ge 1$. If $r \ge \max\{200000, t^{\log^2 t} \}$, we have
\begin{equation}\label{eq:coolbound}
\left| [u^n] Z(u) \right| \le t^{\frac{n}{2}} t^{\frac{n \log \log r}{\log r}} \exp(140\frac{(r+1)}{(\log r)^2}t)
\end{equation}
for all $n \ge 1$. If $r = O(n)$, we have
\begin{equation}\label{eq:coolbound2}
\left| [u^n] Z(u) \right| \le t^{\frac{n}{2}+O_t(\frac{n \log \log (n+2)}{\log (n+2)})}.
\end{equation}

\end{lem}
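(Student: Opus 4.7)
The plan is to prove the three bounds in sequence, each exploiting the two-scale structure of $Z(u)$. For \eqref{eq:binombound}, I would argue by coefficient-wise majorization. Since $L = \lfloor 2\log_t r\rfloor$ satisfies $t^{L/2}\le r$, for every $k\le L$ we have $t^k = t^{k/2}\cdot t^{k/2}\le (r+1)t^{k/2}$, so the coefficient of $u^k$ in the exponent of $Z$ is bounded by $10(r+1)t^{k/2}/k$ in \emph{both} ranges $k\le L$ and $k>L$. Since the exponential of a power series with non-negative coefficients is coefficient-monotone, $Z(u)$ is dominated coefficient-wise by $\exp\bigl(10(r+1)\sum_{k\ge 1}(t^{1/2}u)^k/k\bigr) = (1-t^{1/2}u)^{-10(r+1)}$, and reading off $[u^n]$ gives \eqref{eq:binombound}.

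For \eqref{eq:coolbound}, I would rescale $v:=t^{1/2}u$ and set $F(v):=Z(v/\sqrt{t})$, which still has non-negative Taylor coefficients and satisfies $[u^n]Z(u) = t^{n/2}[v^n]F(v)$. Cauchy's inequality gives $[v^n]F(v)\le \rho^{-n}F(\rho)$ for any $\rho$ in the disc of convergence. The key trick is to choose $\rho:=t^{-\log\log r/\log r}$, which makes $\rho^{-n} = t^{n\log\log r/\log r}$ \emph{exactly}, leaving no residual $n$-dependence; it then suffices to show $F(\rho)\le \exp(70(r+1)t/(\log r)^2)$. I would split $\log F(\rho)$ at $k=L$: in the inner range, $t^{k/2}\rho^k = t^{k\beta}$ with $\beta:=1/2-\log\log r/\log r$, and the hypotheses $r\ge 20000$, $r\ge t^{\log^2 t}$ ensure $\beta\ge 1/4$, so $t^\beta\ge 2^{1/4}$ is uniformly bounded away from $1$; the geometric estimate $\sum_{k\le L}t^{k\beta}\le t^{(L+1)\beta}/(t^\beta-1)$ reduces matters to bounding $t^{L\beta} = t^{L/2}\rho^L$. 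Combining $t^{L/2}\le r$ with the bound $\rho^L\le (\log r)^{-2}(\log r)^{\log t/\log r}$ (from $L\log t\ge 2\log r - \log t$) and the identity $(\log r)^{\log t/\log r} = t^{\log\log r/\log r}\le t$ gives a contribution of $O(rt/(\log r)^2)$ with an \emph{absolute} constant. The tail $\sum_{k>L}10(r+1)\rho^k/k$ is handled by $\sum_{k>L}\rho^k/k\le \rho^{L+1}/((L+1)(1-\rho))$, using $1-\rho\ge (\log t\log\log r)/(2\log r)$ and $(L+1)(1-\rho)\gtrsim \log\log r$, yielding $O(rt/((\log r)^2 \log\log r))$; tracking absolute constants produces the $70(r+1)t/(\log r)^2$ bound.

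For \eqref{eq:coolbound2}, I would split according to whether $r$ meets the threshold of \eqref{eq:coolbound}. If $r\ge \max(20000, t^{\log^2 t})$, then the hypothesis $r = O(n)$ gives $\log r\le \log(n+2) + O(1)$, so the main factor becomes $t^{O(n\log\log(n+2)/\log(n+2))}$ and the constant $\exp(70(r+1)t/\log^2 r) = \exp(O_t(n/\log^2 n))$ is absorbed since $1/\log^2 n = o(\log\log n/\log n)$. Otherwise $r$ is bounded solely in terms of $t$, and \eqref{eq:binombound} gives $\binom{O_t(1)+n-1}{n} = n^{O_t(1)}$, which is again absorbed into $t^{O_t(n\log\log(n+2)/\log(n+2))}$. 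The main obstacle lies in \eqref{eq:coolbound}: tracking absolute constants through the saddle-point estimate tightly enough to produce exactly the factor $70t$ in front of $(r+1)/(\log r)^2$. The identity $(\log r)^{\log t/\log r} = t^{\log\log r/\log r}\le t$, valid whenever $\log\log r\le \log r$, is what makes the $t$-linear dependence in $70t$ possible, since it allows the residual factor $\rho^L/(\log r)^{-2}$ to be absorbed into the single power of $t$.
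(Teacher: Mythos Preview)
Your arguments for \eqref{eq:binombound} and \eqref{eq:coolbound} are correct and essentially identical to the paper's: the paper also rescales to $\widetilde Z(u)=Z(u/\sqrt t)$, majorizes by $(1-u)^{-10(r+1)}$ for the first bound, and for the second chooses the same radius $R=t^{-\log\log r/\log r}$ and bounds the head and tail of $\log\widetilde Z(R)$ exactly as you do (your $t^\beta$ is the paper's $R\sqrt t$, and your geometric estimate $t^{(L+1)\beta}/(t^\beta-1)$ is literally $\frac{(R\sqrt t)^L}{1-1/(R\sqrt t)}$).

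There is, however, a genuine gap in your derivation of \eqref{eq:coolbound2}. In your first case you assert that ``$r=O(n)$ gives $\log r\le \log(n+2)+O(1)$, so the main factor becomes $t^{O(n\log\log(n+2)/\log(n+2))}$.'' But the function $x\mapsto \log\log x/\log x$ is \emph{decreasing} for $x>e^e$, so an \emph{upper} bound on $r$ gives a \emph{lower} bound on $\log\log r/\log r$, not the upper bound you need. Concretely, nothing in your hypotheses prevents $r$ from being exactly $\max(20000,t^{\log^2 t})$ for all $n$; then $\log\log r/\log r$ is a fixed positive constant $c_t$, and $t^{n\log\log r/\log r}=t^{c_t n}$ grows genuinely exponentially in $n$, which is not $t^{O_t(n\log\log n/\log n)}$.

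The paper closes this gap by adding $\sqrt n$ to the threshold: it applies \eqref{eq:coolbound} only when $r\ge\max\{20000,\,t^{\log^2 t},\,\sqrt n\}$. The extra condition $r\ge\sqrt n$ forces $\log r\ge\tfrac12\log n$, which combined with $r=O(n)$ yields $\log\log r/\log r=O(\log\log n/\log n)$ as needed. In the complementary case $r<\max\{20000,\,t^{\log^2 t},\,\sqrt n\}$ one uses \eqref{eq:binombound} together with $\binom{n+k}{n}\le(n+k)^{\min\{n,k\}}$: since now $10(r+1)-1=O_t(\sqrt n)$, the binomial is at most $(O(n))^{O_t(\sqrt n)}=\exp(O_t(\sqrt n\log n))$, and $\sqrt n\log n=o(n\log\log n/\log n)$.
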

\begin{proof}
We work with the modified function
\begin{equation}
\widetilde{Z}(u):=Z(u/\sqrt{t}),
\end{equation}
so that
\begin{equation}
[u^n] Z(u) = t^{n/2}[u^n]\widetilde{Z}(u).
\end{equation}
As $t^{k/2} \le r+1$ for $k \le L$, we have 
\begin{equation}
\left| [u^n] \widetilde{Z}(u) \right| \le \left| [u^n] \exp( \sum_{k \ge 1} 20(r+1)\frac{u^k}{k}) \right| =  [u^n] (1-u)^{-20(r+1)},
\end{equation}
which establishes \eqref{eq:binombound}. As $\widetilde{Z}$ has non-negative coefficients and radius of convergence $1$, we have
\begin{equation}
|[u^n] \widetilde{Z}(u) | \le \sum_{i \ge 0}R^{i-n}  [u^i]\widetilde{Z}(u)= \frac{\widetilde{Z}(R)}{R^n}
\end{equation}
for every $R\in (0,1)$. If $R\in (1.2/\sqrt{t},1)$, we can bound $\sum_{ 1 \le k \le L} 20t^{k/2}R^k/k$ from above by
\begin{equation}
\sum_{ 1 \le k \le L} \frac{20t^{k/2}}{k}R^k \le \frac{20(R\sqrt{t})^L}{1-1/(R\sqrt{t})} \le 120(r+1) R^L,
\end{equation}
and the sum $\sum_{k>L} 20(r+1) R^k/k$ by
\begin{equation}
\sum_{k>L} 20(r+1) \frac{R^k}{k} \le \frac{20(r+1)}{L+1} \frac{R^{L}}{1-R}.
\end{equation}
Thus, for every $R \in (1.2/\sqrt{t},1)$,
\begin{equation}\label{eq:zurbound}
|[u^n]\widetilde{Z}(u)| \le \exp\Big( 20(r+1)R^L (6 + \frac{1}{(L+1)(1-R)}) - n \log R\Big).    
\end{equation}
Assume $r \ge 200000$ and choose $R=t^{-\frac{\log \log r}{\log r}}$ in \eqref{eq:zurbound}. We then have
\begin{equation}\label{eq:valsinexp}
-n \log R = n \frac{\log \log r}{\log r} \log t, \qquad R^L \le  t^{-\frac{\log \log r}{\log r} (2 \log_t r - 1)} =  \frac{t^{ \frac{\log \log r}{\log r}}}{(\log r)^2} \le \frac{t}{(\log r)^2}.
\end{equation}
Assuming further $r \ge t^{\log^2 t}$, we have $\frac{\log t \log \log r}{\log r} \in (0,1)$, which implies $R \le 1-\log t \log \log r/(2\log r)$, and so
\begin{equation}\label{eq:valsinexp2}
\frac{1}{(L+1)(1-R)} \le \frac{\log t}{2 \log r} \frac{2 \log r}{\log t \log \log r } \le 1.
\end{equation}
Plugging \eqref{eq:valsinexp} and \eqref{eq:valsinexp2} in \eqref{eq:zurbound}, we obtain \eqref{eq:coolbound}. To prove \eqref{eq:coolbound2}, use \eqref{eq:coolbound} if $r \ge \max\{ 200000,t^{\log^2 t}, \sqrt{n}\}$, and otherwise use \eqref{eq:binombound} together with the bound $\binom{n+k}{n}\le (n+k)^{\min\{n,k\}}$.
\end{proof}

\section{Proof of Theorem \ref{thm:expothm}}\label{sec:proofl1}
By \eqref{eq:triang}, it suffices to bound
\begin{equation}
 \sum_{\substack{\FactType \in \Omega \\ |\FactType|=n}} |S(n,\chi\cdot \Ind_{\FactType})|,
\end{equation}
where $\Ind_{\FactType}$ is the indicator function of polynomials $f$ with $\FactType_f=\FactType$ (see \eqref{eq:snalpha} for the definition of $S$). Let
\begin{equation}
\Omega_d \subseteq \Omega
\end{equation}
be the subset of factorization types containing only pairs $(x,y)\in \mathbb{N}^2$ with $x=d$. The elements of $\Omega_d$, for each $d$, may be parametrized by partitions -- to each $\lambda=(\lambda_1,\lambda_2, \ldots, \lambda_k) \in \YY$, we associate 
\begin{equation}
\omega_{\lambda,d}:=\{ (d,\lambda_1),\ldots,(d,\lambda_k)\} \in \Omega_d.
\end{equation}
Note that $|\omega_{\lambda,d}| = d|\lambda|$. By Lemma \ref{lem:multi}, $\sum_{\FactType \in \Omega, \, |\FactType|=n} |S(n,\chi\cdot \Ind_{\FactType})|$ is the coefficient of $u^n$ in the following power series:
\begin{equation}
F_{\chi}(u):= \prod_{d \ge 1} F_d(u^d)
\end{equation}
where
\begin{equation}
F_d(u):=\sum_{\lambda \in \YY}   \left|S(d|\lambda|,\chi\cdot \Ind_{\omega_{\lambda,d}})\right| u^{|\lambda|} . 
\end{equation}
The terms with $\ell(\lambda)>|\mathcal{P}_d|$ do not contribute to $F_d(u)$, as there is no factorization type with more than $|\mathcal{P}_d|$ distinct primes of degree $d$. By Lemma \ref{lem:monomial} and \eqref{eq:basechange}, for each $\lambda \in \YY$ and $d \ge 1$, we have
\begin{equation}\label{eq:stoc}
S(d|\lambda|,\chi\cdot \Ind_{\omega_{\lambda,d}}) = \sum_{\mu \vdash |\lambda|} c_{\lambda,\mu} p_{\mu}(\chi(P): P \in \mathcal{P}_d).
\end{equation}
Let $\ord \ge 2$ be the order of $\chi$. Writing $\mu$ as $(\mu_1,\mu_2,\ldots,\mu_r)$, we may bound $p_{\mu}(\chi(P): P \in \mathcal{P}_d)$ using Lemma \ref{lem:PrimeBound} as follows:
\begin{equation}\label{eq:pbound}
\begin{split}
\left| p_{\mu}(\chi(P): P \in \mathcal{P}_d) \right| &= \prod_{i=1}^{r} \left| p_{\mu_i}(\chi(P): P \in \mathcal{P}_d) \right| \\
&\le \prod_{\substack{1 \le i \le r \\ \ord \nmid \mu_i}} \left( \min\{ \frac{q^{\frac{d}{2}}}{d}(\ell+\deg(M)+1),\frac{q^d}{d} \} \right) \prod_{\substack{1 \le i \le r \\ \ord \mid \mu_i}} \left( \frac{q^{d}}{d}\right).
\end{split}
\end{equation}
From \eqref{eq:stoc}, \eqref{eq:pbound}, \eqref{eq:deff} and \eqref{eq:cdef}, we have for all $n \ge 1$
\begin{equation}\label{eq:stoexpf}
\begin{split}
\sum_{\substack{\lambda \vdash n\\ \ell(\lambda) \le |\mathcal{P}_d|}} \left| S(d|\lambda|,\chi\cdot \Ind_{\omega_{\lambda,d}}) \right| &\le \sum_{\substack{\lambda, \mu \vdash n \\ \\\ell(\lambda) \le |\mathcal{P}_d|}} |c_{\lambda,\mu}| \prod_{\substack{1 \le i \le \ell(\mu) \\ \ord \nmid \mu_i}} 
\left(\min\{ \frac{q^{\frac{d}{2}}}{d}(\ell+\deg(M)+1),\frac{q^d}{d} \}\right)
\prod_{\substack{1 \le i \le \ell(\mu) \\ \ord \mid \mu_i}} \left( \frac{q^{d}}{d}\right)\\
&= \mathrm{E}_{\pi \in S_n} f(\pi)  \Big(\sum_{\substack{\lambda \vdash n\\\ell(\lambda) \le |\mathcal{P}_d |}}w(B_{\lambda,\mu_{\pi}}) \Big),
\end{split}
\end{equation}
where $f$ is defined as in \eqref{eq:deff} with $m:=\ord$ and
\begin{equation}
z_1 = z_{1,d}:= \min\{\frac{q^{\frac{d}{2}}}{d} (\ell+\deg(M)+1),\frac{q^{d}}{d}\}, \qquad z_2= z_{2,d}:=\frac{q^d}{d},
\end{equation}
and $\mu_{\pi}$ is the partition of $n$ whose parts are the cycle sizes of $\pi$. By Lemma~\ref{lem:bsum} and \eqref{eq:stoexpf}, the coefficients of $F_d$ are bounded from above by the coefficients of
\begin{equation}
G_d(u):=1+\sum_{n \ge 1} \Big(\sum_{i=0}^{|\mathcal{P}_d|} \binom{n}{i} \Big) \mathrm{E}_{\pi \in S_n} f(\pi)  u^n.
\end{equation}
If $q \ge 4$, we do the following. Replacing $\sum_{i=0}^{|\mathcal{P}_q|} \binom{n}{i}$ with $2^n$, we use Lemma \ref{lem:meanz} to bound the coefficients of $G_d$ from above by the coefficients of
\begin{equation}
H_d(u):= (1-2u)^{-z_{1,d}}(1-(2u)^{\ord})^{(-z_{2,d}+z_{1,d})/\ord},
\end{equation}
and so the coefficients of $F_{\chi}$ are bounded from above by the coefficients of
\begin{equation}
H_{\chi}(u):= \prod_{d \ge 1} H_d(u^d).
\end{equation}
Summarizing,
\begin{equation}\label{eq:stohprodd}
\begin{split}
\sum_{\FactType \in \Omega, \, |\FactType|=n} |S(n,\chi\cdot \Ind_{\FactType})| &= [u^n] F_{\chi}(u) \le [u^n] H_{\chi}(u) \\
&= [u^n] \prod_{d \ge 1} (1-2u^d)^{-z_{1,d}} (1-2^{\ord}u^{d\, \ord})^{(-z_{2,d}+z_{1,d})/\ord}.
\end{split}
\end{equation}
The logarithm of the power series $H_{\chi}$ is given by 
\begin{equation}
\log H_{\chi}(u) = \sum_{i,d \ge 1} \frac{2^iu^{di}}{i} z_{1,d} + \sum_{i,d \ge 1} \frac{2^{i\, \ord} u^{di\, \ord}}{i} \frac{z_{2,d}-z_{1,d}}{\ord},
\end{equation}
so that 
\begin{equation}
[u^k]\log H_{\chi}(u) = \frac{1}{k}\sum_{di = k} 2^i (z_{1,d}d) + \frac{\Ind_{\ord \mid k}}{k} \cdot  \sum_{di = \frac{k}{\ord}}2^{i\, \ord}(z_{2,d}-z_{1,d})d.
\end{equation}
Set
\begin{equation}
L:=\lfloor 2\log_q (\ell+\deg(M)+1) \rfloor.
\end{equation}
For $d \le L$, we have $z_{1,d}=z_{2,d}=q^d/d$, so that by the first part of Lemma \ref{lem:bounds} with $x=q$,
\begin{equation}
[u^k]\log H_{\chi}(u) \le \frac{1}{k}\sum_{di = k} 2^i q^d \le \frac{8q^k}{k}.
\end{equation}
for all $1\le k \le L$. As $z_{2,d}-z_{1,d} \le q^d/d$ and $z_{1,d} \le q^{\frac{d}{2}} (\ell +\deg(M)+1)/d$, we also have, by the first part of Lemma~\ref{lem:bounds} with $x=\sqrt{q}$ and the second part of the lemma with $x=q$, that
\begin{equation}\label{eq:logh}
\begin{split}
[u^k] \log H_{\chi}(u) &\le \frac{(\ell + \deg(M)+1)}{k}\sum_{di=k} 2^i q^{\frac{d}{2}} + \frac{\Ind_{\ord \mid k}}{k} \cdot \sum_{di = \frac{k}{\ord}} 2^{i\, \ord} q^d \\
&\le 8(\ell + \deg(M)+1) \frac{q^{\frac{k}{2}}}{k} + 10\frac{q^{\frac{k}{2}}}{k}\le 10(\ell + \deg(M)+2)\frac{q^{\frac{k}{2}}}{k}
\end{split}
\end{equation}
for all $k \ge 1$. From \eqref{eq:stohprodd} and \eqref{eq:logh}, we have
\begin{equation}
\sum_{\FactType \in \Omega, \, |\FactType|=n} |S(n,\chi\cdot \Ind_{\FactType})| \le [u^n] \exp( \sum_{1 \le k \le L} \frac{10q^k}{k} u^k+ \sum_{ k > L}10(\ell + \deg(M)+2) \frac{q^{\frac{k}{2}}}{k}u^k),
\end{equation}
and by Lemma \ref{lem:genfunc} with $t=q$, $r=\ell+\deg(M)+1$, we have the estimates 
\begin{equation}
\sum_{\FactType \in \Omega, \, |\FactType|=n} |S(n,\chi\cdot \Ind_{\FactType})| \le q^{\frac{n}{2}} \binom{20(\ell+\deg(M)+2)+n-1}{n}
\end{equation}
and, if $\ell +\deg(M) = O(n)$,
\begin{equation}
\sum_{\FactType \in \Omega, \, |\FactType|=n} |S(n,\chi\cdot \Ind_{\FactType})| \le q^{\frac{n}{2}} e^{O_q(\frac{n \log \log (n+2)}{\log(n+2)})}.
\end{equation}
The theorem is established for $q \ge 4$. We now suppose $q \in \{2,3\}$. We define $\widetilde{H_d}:=H_d$ for $d \ge 2$, while for $d=1$
\begin{equation}
\widetilde{H_1}(u):= 1+ \sum_{n \ge 1} 1.4^n \mathrm{E}_{\pi \in S_n} f(\pi) u^n = (1-1.4u)^{-z_{1,1}}(1-(1.4u)^{\ord})^{(-z_{2,1}+z_{1,1})/\ord},
\end{equation}
where in the last passage we used Lemma \ref{lem:meanz}. As $|\mathcal{P}_1 | =q\le 3$ for $q \in \{2,3\}$, it follows that $\sum_{i=0}^{|\mathcal{P}_1|} \binom{n}{i}\le \sum_{i=0}^{3} \binom{n}{i}$, which is at most $7\cdot 1.4^n$ by Lemma \ref{lem:binomtoexp}. Thus the coefficients of $G_1$ are bounded from above by those of $\widetilde{H_1}$ times $7$, and so the coefficients of $F_{\chi}$ are bounded from above by those of
\begin{equation}
\widetilde{H_{\chi}}(u) := \prod_{d \ge 1} \widetilde{H_{d}}(u^d),
\end{equation}
times $7$. As in the case $q \ge 4$, we proceed to upper bound the coefficients of $\log \widetilde{H_{\chi}}$. For $d \le L$, we have $z_{1,d}=z_{2,d}=q^d/d$, so that by Lemma \ref{lem:bounds2} with $x=q$,
\begin{equation}\label{eq:logtildehsmall}
 [u^k] \log \widetilde{H_{\chi}}(u) \le  \frac{1}{k}\Big(1.4^k q + \sum_{di=k,\, d \neq 1} 2^i q^{d}\Big)\le 10\frac{q^{k}}{k},
\end{equation}
for all $1 \le k \le L$, where we used $1.4<q\le 3$. For any $k \ge 1$, we have by Lemma \ref{lem:bounds2} that
\begin{equation}\label{eq:logtildeh}
\begin{split}
[u^k] \log \widetilde{H_{\chi}}(u) &\le \frac{(\ell + \deg(M)+1)}{k}\Big( 1.4^k q^{\frac{1}{2}} + \sum_{di=k,\, d \neq 1} 2^i q^{\frac{d}{2}} \Big)+ \frac{\Ind_{\ord \mid k}}{k} \big( \sum_{di = \frac{k}{\ord},\, d \neq 1} 2^{i\, \ord} q^d + 1.4^k q\big) \\
&\le \frac{(\ell + \deg(M)+1)}{k}\Big(2 \cdot 1.4^k + 7 q^{\frac{k}{2}} \Big) + 14\frac{q^{\frac{k}{2}}}{k}\le 20(\ell + \deg(M)+2)\frac{q^{\frac{k}{2}}}{k}.
\end{split}
\end{equation}
From this point we continue as in the case $q \ge 4$ and conclude the proof of the theorem.

\section*{Acknowledgements}
The author would like to express his gratitude to Darij Grinberg for pointing out to him the work of E\u{g}ecio\u{g}lu and Remmel \cite{ege1991}. The author would like to thank Dimitris Koukoulopoulos for a discussion on $H(x,y,2y)$ and to Brad Rodgers, Ze'ev Rudnick, Will Sawin and the anonymous referees for useful comments and suggestions. The author was supported by the European Research Council under the European Union's Seventh Framework Programme (FP7/2007-2013) / ERC grant agreement no. 320755.
	
\bibliographystyle{abbrv}
\bibliography{references}

\Addresses

\end{document}